\documentclass[a4paper]{article}

\renewcommand{\tilde}{\widetilde}

\usepackage{tikz}
\usepackage{color}
\usepackage{pgfplots}
\usepackage{pgfplotstable}
\usepackage{booktabs}
\usepackage{listings}
\usepackage{amsmath,amssymb,amsfonts,amsthm}
\usetikzlibrary{patterns,pgfplots.groupplots}
\usepackage{enumerate}
\usepackage{algorithm}
\usepackage{algpseudocode}
\usepackage{ stmaryrd }
\usepackage{mathrsfs}
\usepackage{hyperref}
\usepackage{geometry}
\renewcommand{\tilde}{\widetilde}
\newcommand{\inter}[2]{\llbracket #1,#2\rrbracket}
\newcommand{\inters}[4]{\inter{#1}{#2}\times\inter{#3}{#4}}
\newcommand{\hm}{hm-toolbox}
\newcommand{\hmatrix}{HALR}
\newcommand{\HALR}{HALR}
\newcommand{\email}[1]{\texttt{#1}}
\newcommand{\hmhmatrix}{\lstinline{halr}}
\newcommand{\maxrank}{\ensuremath{\mathsf{maxrank}}}

\newcommand{\vect}{\mathrm{vec}}
\colorlet{DenseBlockColor}{gray!60}

\pgfplotstableset{
	every head row/.style={before row=\toprule,after row=\midrule},
	clear infinite
}

\DeclareMathOperator{\diag}{diag}
\newcommand{\norm}[1]{\lVert#1\rVert}
	\theoremstyle{definition}
\newtheorem{definition}{Definition}[section]
\theoremstyle{remark}
\newtheorem{remark}[definition]{Remark}

\theoremstyle{theorem}
\newtheorem{lemma}[definition]{Lemma}

\definecolor{mygreen}{RGB}{28,172,0} 
\definecolor{mylilas}{RGB}{170,55,241}
\definecolor{stringcolor}{RGB}{180,10,10}
\definecolor{mygray}{RGB}{240,240,240}
\definecolor{mygray2}{RGB}{200,200,200}
\lstset{language=Matlab,%
	basicstyle=\ttfamily,
	breaklines=true,%
	backgroundcolor=\color{mygray},
	frame = leftline,
	morekeywords={hss, hodlr, ulv, cluster, compress, hmatrix, halr},
	keywordstyle=\color{blue},%
	morekeywords=[2]{1}, keywordstyle=[2]{\color{black}},
	identifierstyle=\color{black},%
	stringstyle=\color{stringcolor},
	commentstyle=\color{mygreen},%
	showstringspaces=false,
	numbers=left,%
	numberstyle={\tiny \color{black}},
	numbersep=9pt, 
	emph=[1]{for,end,break},emphstyle=[1]\color{red}, 
}

\author{
	Stefano Massei\thanks{TU Eindhoven, Netherlands,
		\email{s.massei@tue.nl}. The work of Stefano Massei has been partially supported by the SNSF research project \emph{Fast algorithms from low-rank updates}, grant number: 200020\_178806.} \and
	Leonardo Robol\thanks{Department of Mathematics, University of Pisa, 
		\email{leonardo.robol@unipi.it}. The work of Leonardo Robol was partially supported by the GNCS/INdAM project ``Metodi low-rank per problemi di algebra lineare con struttura data-sparse''.} \and 	Daniel Kressner\thanks{EPF Lausanne, Switzerland,
		\email{daniel.kressner@epfl.ch}}
}

\title{Hierarchical adaptive low-rank format with applications to discretized PDEs}

\pgfplotsset{compat=1.15}
\date{}
\begin{document}

\maketitle

\begin{abstract}
A novel compressed matrix format is proposed that combines an adaptive hierarchical partitioning of the matrix with
low-rank approximation. One typical application is the approximation of
discretized functions on rectangular domains; the flexibility of the format makes it possible to deal with functions that feature singularities in small, localized regions. To deal with time evolution and relocation of 
singularities, the partitioning can be dynamically adjusted based on 
features of the underlying data. Our format can be leveraged to efficiently solve linear systems with Kronecker 
product structure, as they arise from discretized partial differential equations (PDEs). For this purpose, these linear systems are 
rephrased as linear matrix equations and a  recursive solver is derived from low-rank updates of such equations.
We demonstrate the effectiveness of our framework
for stationary and time-dependent, linear and nonlinear PDEs, including 
the Burgers' and Allen-Cahn equations.
\end{abstract}
\bigskip

\section{Introduction}

Low-rank based data compression can sometimes lead to a dramatic acceleration of numerical simulations. A striking example is the solution of two-dimensional
elliptic PDEs on rectangular domains with smooth source terms. In this case, the (structured) discretization of the source term and the solution lead to matrices that allow for excellent low-rank approximations. Under suitable assumptions on the differential operator, one can 
recast the corresponding discretized PDE as a matrix equation \cite{palitta2016matrix,townsend2015automatic}. In turn, this yields the possibility to facilitate efficient algorithms for matrix equations with low-rank right-hand side~\cite{druskin2011analysis,benner2009adi}. However, in many 
situations of interest the smoothness property is not present in the whole domain. A typical instance are solutions that feature singularities along curves, while being highly regular elsewhere. This renders a global low-rank approximation ineffective.
Adaptive discretization schemes, such as the adaptive finite element method, are one way to handle such situations. In this work, we will focus on a purely algebraic approach.

During the last decades, there has been significant effort in developing hierarchical low-rank formats that apply low-rank approximation only locally. These formats recursively partition the matrix into blocks that are either represented as a low-rank matrix or are sufficiently small to be stored as a dense matrix. These techniques are usually applied in the context of operators with a discretization known to feature low-rank off-diagonal blocks, such as integral operators with singular kernel~\cite{borm}.
The use of these formats for representing the solution itself has also been proposed \cite{grasedyck2004existence,massei2018solving} but its applicability is limited by the fact that the 
location of the singularities needs to be known beforehand in order to define a suitable admissibility criterion \cite{hodlr, hackbusch,borm}. This makes the format too inflexible to treat time-dependent problems for which the region of non-smoothness evolves over time. In the context of tensors, it has been recently proposed a bottom-up approach to identify a partitioning of the domain and perform a piecewise compression of a target tensor by means of local high-order singular value decompositions \cite{ehrlacher2021adaptive}. A very different and promising approach proceeds by forming high-dimensional tensors from a quantization of the function and applying the so called QTT compression format; see~\cite{Kazeev2018} and the references therein.

In this paper, we propose a new format that automatically adapts the choice of the hierarchical partitioning and the location of the low-rank blocks without requiring the use of an admissibility criterion. The admissibility is decided on the fly by the success or failure
of low-rank approximation techniques. We call this format \emph{Hierarchical Adaptive Low-Rank (\HALR{})} matrices.

This work focuses on the application of \HALR{} matrices to the following class of time-dependent PDEs:
\begin{equation}\label{eq:reaction-diffusion}
\begin{cases}\frac{\partial u}{\partial t} = Lu + f(t, u, \nabla u)& (x,y)\in\Omega, \quad t\in[0, T_{\max}]\\
u(x, y, 0) = u_{0}(x,y) \quad 
\end{cases}
\end{equation}
where $\Omega\subset \mathbb R^2$ is a rectangular domain, $L$ is a linear differential operator, $f$ is nonlinear  and \eqref{eq:reaction-diffusion} is coupled with appropriate boundary conditions in space. 
Discretizing \eqref{eq:reaction-diffusion} in time  with the IMEX Euler method \cite{ascher1997implicit} and in space with, e.g., finite differences leads to 
\begin{equation}\label{eq:imex}
(I-\Delta tL_n)\mathbf u_{n,\ell+1} = \mathbf u_{n,\ell}+\Delta t(\mathbf f_{n,\ell}+ \mathbf b_{n,\ell}),
\end{equation}
where $L_n$ represents the discretization of the operator $L$, $\mathbf u_{n,\ell}$ and $\mathbf f_{n,\ell}$ are the discrete counterparts of $u$ and $f$ at  time $t_\ell:=\ell\Delta t$, $\ell\in\mathbb N$, and $\mathbf b_{n,\ell}$ accounts for the boundary conditions. When using finite differences on a tensor grid, it is natural to reshape the vectors $\mathbf u_{n,\ell}, \mathbf f_{n,\ell}, \mathbf b_{n,\ell}$  into matrices $U_{n,\ell} $, $ F_{n,\ell}$, $B_{n,\ell}$.
In our examples, the matrix $L_n$ will often take the form
$L_n = I \otimes A_{1,n} + A_{2,n} \otimes I$, a structure that is sometimes referred as 
having splitting-rank $2$ \cite{townsend2015automatic} and which allows to rephrase 
the linear system~\eqref{eq:imex} as a linear matrix equation.

As a more specific 
guiding example, let us consider the two-dimensional Burgers' equation 
over the unit square:
\begin{equation}\label{eq:burgers}
\frac{\partial u}{\partial t}= K\left(\frac{\partial^2 u}{\partial x^2} + \frac{\partial^2 u}{\partial y^2}\right) - u\cdot \left(\frac{\partial u}{\partial x}+\frac{\partial u}{\partial y}\right)\qquad
(x,y)\in \Omega:=(0,1)^2,
\end{equation}
with $K>0$.
Under suitably chosen boundary conditions, the solution of \eqref{eq:burgers} is given 
by $u(x,y,t)= \left[1+\exp\left(\frac{x+y-t}{2K}\right)\right]^{-1}$; see \cite[Example~3]{burger}. 
For a fixed time $t$, the snapshot $u_t:=u(\cdot,\cdot, t)$ describes a transition between two levels across the line $x+y=t$.  For a small coefficient $K$ the transition becomes quite sharp, see Figure~\ref{fig:burgers_sol}.  %
\begin{figure}[h!]
	\begin{center}
		~~\includegraphics
		{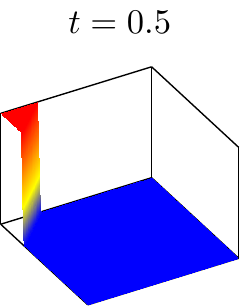}~~~~~~\includegraphics
		{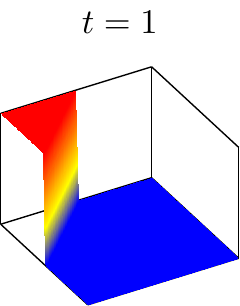}~~~~~~\includegraphics
		{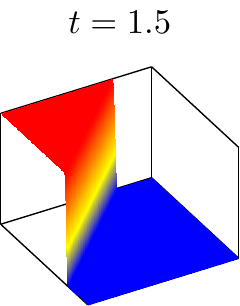}~~~~~~\includegraphics
		{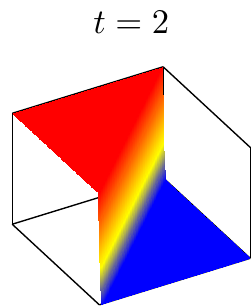} \\[8pt]
		\includegraphics[width=.2\linewidth]{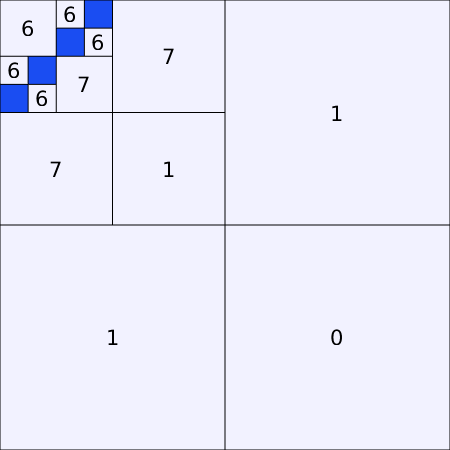}~~~~~\includegraphics[width=.2\linewidth]{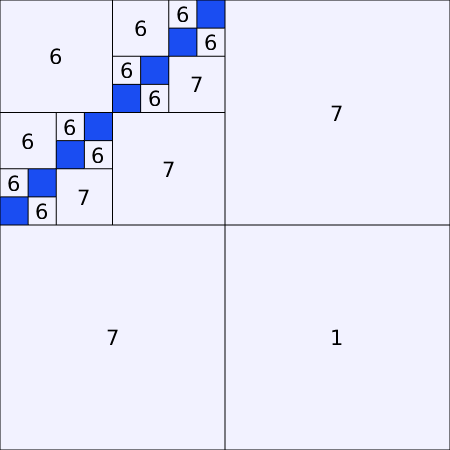}~~~~~\includegraphics[width=.2\linewidth]{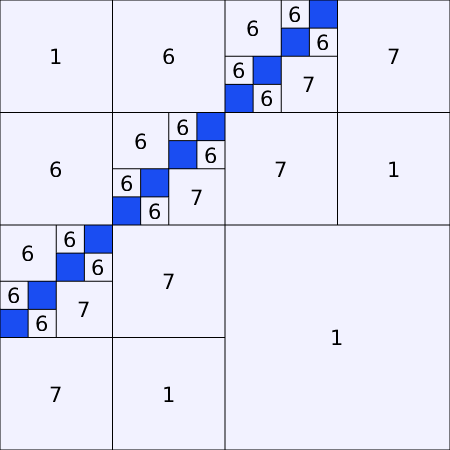}~~~~~\includegraphics[width=.2\linewidth]{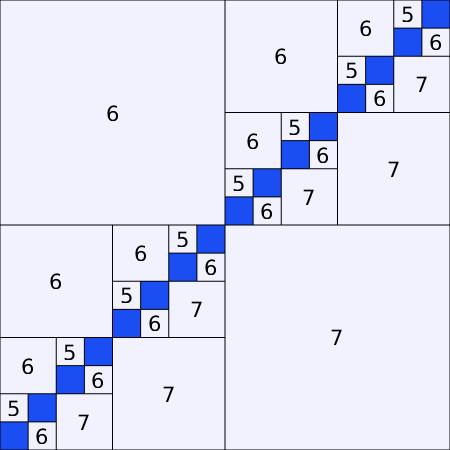}
	\end{center}
	
	\caption{Top: Snapshots  of $u(x,y,t)= \left[1 +\exp\left(\frac{x + y - t}{2K}\right)\right]^{-1}$ for $t=0.5,1,1.5,2$ and $K=0.001$. Bottom: Corresponding block low-rank structure of  $U_{n,\ell}^{\mathrm{sol}}$ for $n=4096$; the numbers indicate the rank of the corresponding block while full rank blocks are colored in blue.}
	
	\label{fig:burgers_sol}
\end{figure} %
Let $U_{n,\ell}^{\mathrm{sol}}$ be the matrix collecting the samples of $u_{t_\ell}$ on an equispaced 2D lattice; $U_{n,\ell}^{\mathrm{sol}}$
has a time dependent rank structure. More specifically, the submatrices of $U_{n,\ell}^{\mathrm{sol}}$ corresponding to subdomains which are far away
from $x+y=t_\ell$ are numerically low-rank because they 
contain samples of a smooth function over a rectangular domain; see the 
lower part of Figure~\ref{fig:burgers_sol}. 
Therefore, an efficient representation strategy  for the solution of $\eqref{eq:burgers}$ needs to adapt the block low-rank structure of $U_{n,\ell}^{\mathrm{sol}}$ according to $\ell$.

In this work, we develop techniques for:
\begin{enumerate}[(i)]
	\item Computing a \HALR{} representation for the discretization of a function explicitly given in terms of a black-box evaluation function.
	\item Solving the linear system \eqref{eq:imex} by exploiting the \HALR{} structure in the right-hand-side and the decomposition $L_n= I\otimes A_{1,n}+A_{2,n}\otimes I$. 
\end{enumerate}
Task (i) yields structured representations for the initial condition $\mathbf u_{n,0}$ and the source term $\mathbf f_{n,\ell}$. Taken together, Tasks (i) and (ii) allow to efficiently compute the matricized solution $U_{n,\ell+1}$ of~\eqref{eq:imex}. The assumption on the discretized operator in (ii)
is satisfied for $L=\frac{\partial^2}{\partial x^2}+\frac{\partial^2}{\partial y^2}$ and enables us to rephrase~\eqref{eq:imex}  as the matrix equation
\begin{equation}\label{eq:mat-eq}
\left(\frac 12I-\Delta tA_{1,n}\right)U_{n,\ell+1}+U_{n,\ell+1}\left(\frac 12I-\Delta tA_{2,n}^T\right)=\Delta tF_{n,\ell}+U_{n,\ell}+B_{n,\ell}.
\end{equation} 

The paper is organized as follows; in Section~\ref{sec:hblr} we introduce \HALR{} matrices and discuss their arithmetic.  Section~\ref{sec:lyap} focuses on solving  matrix equations of the form \eqref{eq:mat-eq} where the right-hand-side is represented in the \HALR{} format. There, we propose a divide-and-conquer method  whose cost scales comparably to the memory resources used for storing the right-hand-side.
In Section~\ref{sec:constructors} we address the problems of constructing and adapting \HALR{} representations. In particular, Section~\ref{sec:repartition} considers the following scenario: given a parameter $\mathsf{maxrank}$, determine the partitioning that provides the biggest reduction of the storage cost and uses low-rank blocks of rank bounded by $\mathsf{maxrank}$. 
In Section~\ref{sec:experiments} we incorporate \HALR{} matrices into integration schemes for PDEs and we perform numerical tests that  demonstrate the computational benefits of our approach. Conclusions are drawn in Section~\ref{sec:conclusions}.   
\subsection{Notation}
To simplify the statements of some definitions we introduce the following compact notation for intervals of consecutive integers: 
\[\inter{i_l}{i_r} :=\{ i_l, i_l+1, \dots, i_r \}\subseteq \mathbb N, \quad \text{ for } 0< i_l \leq i_r.\]
In addition, we write $\inter{i_l}{i_r} <\inter{i_l'}{i_r'} $ if $i_r<i_l'$ and we use the symbol $\sqcup$ to indicate the union of disjointed sets.

\section{HALR}\label{sec:hblr}

We are concerned with  matrix partitioning  described by  quad-trees, i.e. trees with four branches at each node. More explicitly, given a matrix $A$ we consider the block partitioning 
\begin{equation}\label{eq:partitioning}
A=\begin{bmatrix}
A_{11}&A_{12}\\
A_{21}&A_{22}
\end{bmatrix}
\end{equation}
where the blocks $A_{ij}$ can be either  dense blocks,  low-rank matrices, or recursively partitioned. 
The cases of interest are those where large portions of the matrix are in low-rank form. This is in the spirit of well established hierarchical low-rank formats such as $\mathcal H$-matrices \cite{hackbusch} and $\mathcal H^2$-matrices \cite{borm}. To formalize our deliberations, we first provide the definition of a quad-tree cluster.
\begin{definition} \label{def:quadtreecluster}
	Let $m,n\in\mathbb N$. A tree $\mathcal T$ is  called \emph{quad-tree cluster} for $\inters{1}{m}{1}{n}$ if
	\begin{itemize}
		\item the root node is $\inters{1}{m}{1}{n}$,
		\item each node $I$ is a subset of $\inters{1}{m}{1}{n}$ of the form
		\[
		I=I_r\times I_c:=\inters{m_1}{m_2}{n_1}{n_2}. 
		\]
		\item each non leaf node $I=I_r\times I_c$ has $4$ children $I_{11},I_{12},I_{21},I_{22}$, that are of the form $I_{ij}=I_{r_i}\times I_{c_j}$ such that $I_r=I_{r_1}\sqcup I_{r_2}$, $I_c=I_{c_1}\sqcup I_{c_2}$, and $I_{r_1}< I_{r_2}$, $I_{c_1}< I_{c_2}$.
		\item Each leaf node is labeled either as \texttt{dense} or \texttt{low-rank}.
	\end{itemize} 
	The \emph{depth} of $\mathcal T$ is the maximum distance of a node from the root.
\end{definition}

An example of a quad-tree cluster of depth $4$ is given in Figure~\ref{fig:cluster}; this induces the block structure of a $16\times 16$ matrix shown in the bottom part of the figure. This block structure is formalized in the following definition. 

\begin{figure}[h!]
	\begin{center}
		\centering
		\includegraphics{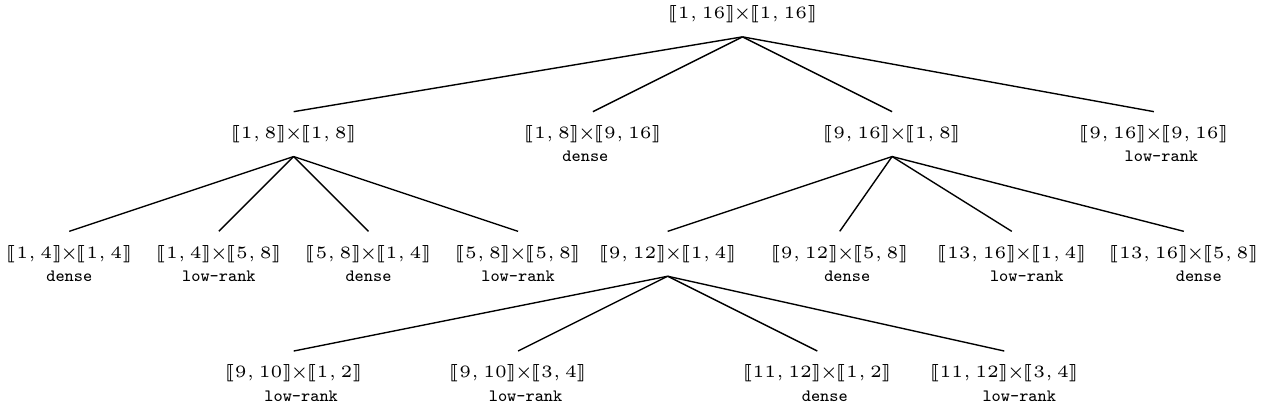}
		\centering
		\includegraphics[width=.4\textwidth]{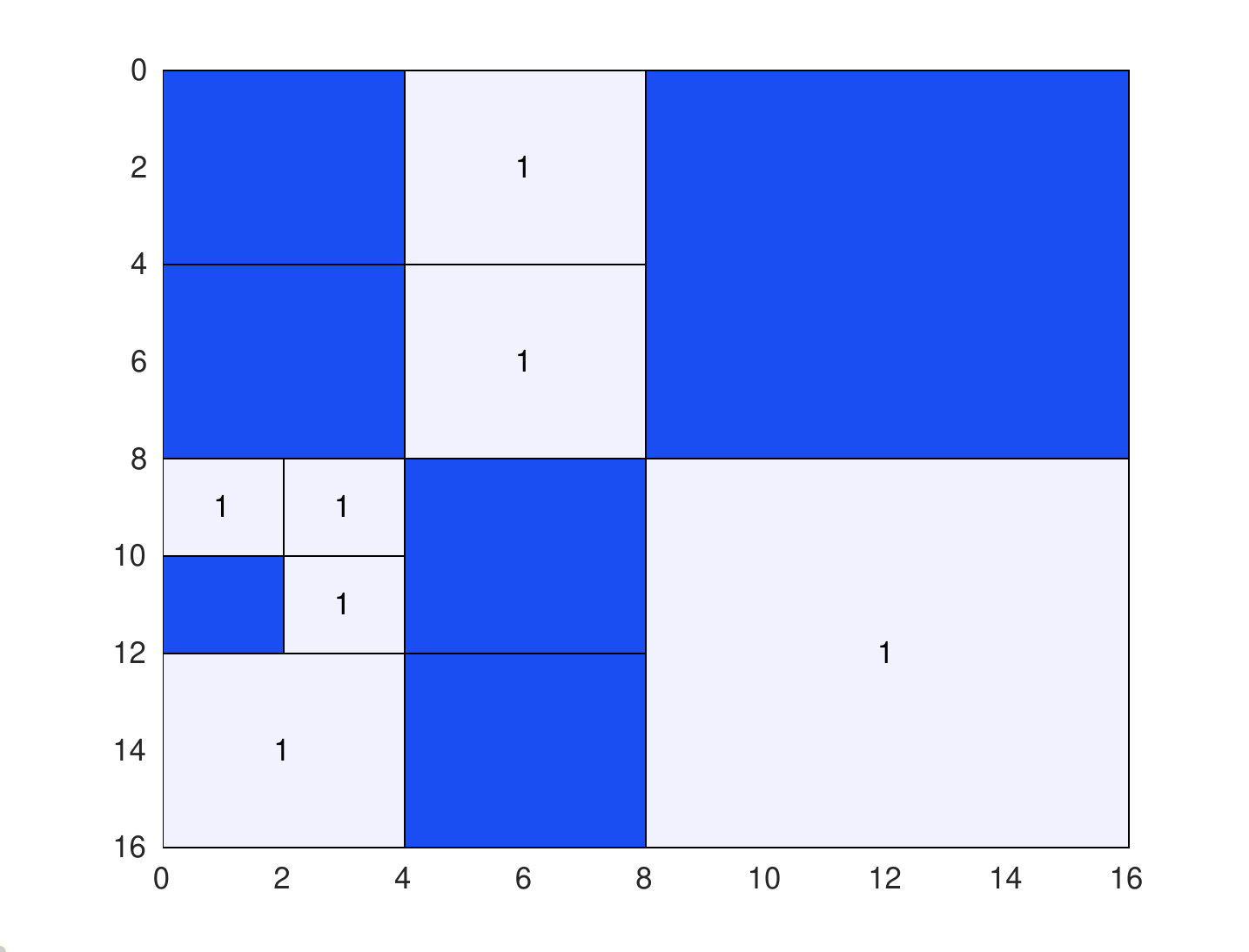}
	\end{center}
	\caption{Example of a quad-tree cluster of depth $4$ and the induced partitioning on the matrix. The leaf nodes labeled as \texttt{dense} correspond to dense blocks colored in blue. The leaf nodes labeled as \texttt{low-rank} are taken of rank $1$ and correspond to the blocks colored in gray.}
	\label{fig:cluster}
\end{figure}

\begin{definition}
	Let $A\in\mathbb C^{m\times n}$ and $\mathcal T$ be a quad-tree cluster for $\inters{1}{m}{1}{n}$.
	\begin{enumerate}
		\item Given $k\in\mathbb N$, $A$ is said to be a $(\mathcal T, k)$ Hierarchical Adaptive Low-Rank (HALR) matrix, in short $(\mathcal T, k)$-\hmatrix, if for every leaf node $I_r\times I_c$ of $\mathcal T$ labeled as \texttt{low-rank}, the submatrix $A(I_r, I_c)$ has rank at most $k$.
		\item The smallest integer $k$ for which $A$ is  $(\mathcal T, k)$-\hmatrix{} is called the \emph{$\mathcal T$-\hmatrix{} rank} of $A$.
	\end{enumerate}  
\end{definition}
There are close connections between $(\mathcal T, k)$-\HALR{} matrices and $\mathcal H$-matrices \cite{hackbusch}. More precisely, any $\mathcal H$-matrix with low-rank blocks  of rank at most $k$ and with binary row and column cluster trees is a $(\mathcal T, k)$-\HALR{} matrix. In this case the quad-tree cluster is obtained from the Cartesian product of the row and column cluster trees. The HODLR format \cite{hodlr} is a special case discussed in more detail in Section~\ref{sec:hodlr-blr}. On the other hand, Definition~\ref{def:quadtreecluster} allows to build quad-tree clusters that can not be written as subsets of any Cartesian product of a row and a column cluster trees. For instance, we might have two nodes (not having the same father) with 
column indices $I_{c},I_{c'}$ such that $I_{c}\cap I_{c'}\neq \emptyset$ and $I_{c}\not\subseteq I_{c'}$, $I_{c'}\not\subseteq I_{c}$. This makes the HALR class slightly more general than $\mathcal H$-matrices.

In the next sections we will describe operations involving HALR matrices and we will tacitly assume to have access to their structured representations, i.e. the quad-tree clusters and the low-rank factors of the \texttt{low-rank} leaves. How to retrieve the HALR representation of a given matrix will be discussed in Section~\ref{sec:constructors}.

\subsection{Matrix-vector product}

In complete analogy with the $\mathcal H$-matrix
arithmetic, the \hmatrix{} structure allows to perform the matrix-vector
product efficiently by relying on the block-recursive procedure
described in Algorithm~\ref{alg:mvec}.  

\begin{algorithm}
	\begin{algorithmic}[1]
		\Procedure{\HALR{}\_MatVec}{$A$, $v$}
		\If{$A$ is a leaf node}
		\State \Return $Av$ \Comment{Exploiting low-rank structure if present}
		\Else
		\State Partition $v = \begin{bmatrix}
		v_1 \\ v_2 
		\end{bmatrix}$
		\State \Return 
		$\begin{bmatrix}
		\Call{\HALR{}\_MatVec}{A_{11}, v_1} + \Call{\HALR{}\_MatVec}{A_{12}, v_2} \\ \Call{\HALR{}\_MatVec}{A_{21}, v_1} + \Call{\HALR{}\_MatVec}{A_{22}, v_2}
		\end{bmatrix}$
		\EndIf
		\EndProcedure
	\end{algorithmic}
	\caption{Matrix-vector product with an \hmatrix{} matrix $A$}
	\label{alg:mvec}
\end{algorithm}

In the particular case when the cluster only contains the root, $A$ itself is either \texttt{low-rank} (of rank $k$) or \texttt{dense} and  Algorithm~\ref{alg:mvec} requires $\mathcal O((m+n)k)$ and 
$\mathcal O(mn)$ flops, respectively. We remark that this cost corresponds to  
the memory required for storing $A$. This statement holds in more generality. 

\begin{lemma} \label{lem:mvec-complexity}
	Let $A \in \mathbb C^{m \times n}$ be a 
	$(\mathcal T, k)$-\hmatrix{}, and $v \in \mathbb C^n$ a vector. Computing $Av$ 
	by Algorithm~\ref{alg:mvec} requires $\mathcal O(S)$ flops, where $S$ is 
	the memory required to store $A$. 
\end{lemma}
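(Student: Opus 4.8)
The plan is to proceed by structural induction on the quad-tree cluster $\mathcal{T}$, after decomposing the total flop count into contributions localized at the nodes. First I would settle the base case, where $A$ is a single leaf: Algorithm~\ref{alg:mvec} returns $Av$ directly, costing $\mathcal{O}(mn)$ for a \texttt{dense} block and $\mathcal{O}((m+n)k)$ for a \texttt{low-rank} block (computing $U(V^*v)$ for factors $U,V$). In both cases this matches, up to a constant, the storage $S_\ell$ of that leaf, giving leaf cost $=\mathcal{O}(S_\ell)$.

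For the recursive step, the crucial observation is that every flop executed by Algorithm~\ref{alg:mvec} is of exactly one of two kinds: a matrix--vector product carried out at a leaf, or a vector addition carried out at an internal node when the four recursive results are combined into the two output subvectors. At an internal node $I=I_r\times I_c$ the two additions form vectors of lengths $|I_{r_1}|$ and $|I_{r_2}|$, hence cost $\mathcal{O}(m_I)$ with $m_I:=|I_r|$. Summing over the whole tree, the total work is
\[
\sum_{\ell\ \mathrm{leaf}}\mathcal{O}(S_\ell)\;+\;\sum_{I\ \mathrm{internal}}\mathcal{O}(m_I).
\]
The first sum is $\mathcal{O}(S)$ by the base case, so the entire statement reduces to showing that the second sum is also $\mathcal{O}(S)$.

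I expect this second bound to be the main obstacle, because a naive node-by-node induction does not close: the four children of $I$ have row dimensions summing to $\sum_{c} m_c = 2m_I$ (each of $I_{r_1},I_{r_2}$ is shared by the two column bands), so the additive term $m_I$ does not telescope against the storage in the obvious way, and indeed $\sum_{I}m_I$ can be as large as $\Theta(2^{\mathrm{depth}}\,m)$. The clean way around this is an \emph{exact} telescoping identity for the row dimensions: writing $\mathrm{int}(I)$ and $\mathrm{leaf}(I)$ for the internal nodes and leaves of the subtree rooted at $I$, one has $\sum_{J\in\mathrm{int}(I)} m_J=\big(\sum_{\ell\in\mathrm{leaf}(I)} m_\ell\big)-m_I$. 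This follows by a one-line induction: it is trivial when $I$ is a leaf, and for internal $I$ it reads $m_I+\sum_{c}\big(\sum_{\ell\in\mathrm{leaf}(c)}m_\ell-m_c\big)=\sum_{\ell\in\mathrm{leaf}(I)}m_\ell-m_I$, using $\sum_{c}m_c=2m_I$.

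Applying the identity at the root gives $\sum_{I\ \mathrm{internal}}m_I=\sum_{\ell\ \mathrm{leaf}}m_\ell-m\le\sum_{\ell\ \mathrm{leaf}}m_\ell$. To finish, I would note that each leaf stores at least $m_\ell$ numbers, since a \texttt{dense} leaf occupies $m_\ell n_\ell\ge m_\ell$ entries and a \texttt{low-rank} leaf (of rank at least one) occupies $(m_\ell+n_\ell)k\ge m_\ell$ entries; hence $\sum_{\ell}m_\ell\le\sum_{\ell}S_\ell=S$. Combining, the addition work is $\mathcal{O}(S)$ as well, and the total cost is $\mathcal{O}(S)$. The only caveat to flag is the degenerate case of exact zero blocks (rank $0$), which carry no storage yet still require forming a length-$m_\ell$ zero output; this is excluded by assuming stored \texttt{low-rank} leaves have rank at least one, or by returning such blocks in $\mathcal{O}(1)$.
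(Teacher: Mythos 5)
Your proof is correct, and it is worth noting that it closes a gap the paper's own argument leaves implicit. The paper proves the lemma by a one-paragraph induction on the depth of $\mathcal T$, with the inductive step consisting of the assertion that ``the cost for $Av$ is dominated by the cost of the $4$ recursive calls''; the $\mathcal O(m_I)$ vector additions performed at each internal node are absorbed into that word ``dominated'' without further justification. You correctly observe that this is the delicate point: a naive induction with a fixed constant does not close, because the four children of a node $I$ have row dimensions summing to $2m_I$ rather than $m_I$, so the additive term does not telescope against the recursive costs in the obvious way (and indeed the raw sum $\sum_I m_I$ over all nodes grows like $2^{\mathrm{depth}}\,m$). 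Your exact identity $\sum_{J\in\mathrm{int}(I)} m_J = \sum_{\ell\in\mathrm{leaf}(I)} m_\ell - m_I$, combined with the observation that every stored leaf holds at least $m_\ell$ entries, is precisely the amortization needed to charge the addition work to the leaf storage and obtain a depth-independent $\mathcal O(S)$ bound. (An equivalent fix would be to strengthen the induction hypothesis to $\mathrm{cost}\le cS-\alpha m$ with $\alpha\ge 1$.) What the paper's version buys is brevity; what yours buys is an actually airtight constant, plus the explicit identification of the rank-zero edge case, which you dispose of appropriately. No errors to report.
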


\begin{proof}
	The result is shown by induction on the depth of $\mathcal T$. By the discussion above, the claim is true
	when $\mathcal T$ consists of a single node. If the result holds for trees of depth up 
	to $d$, and $\mathcal T$ has depth $d + 1$, the cost for $Av$ is dominated by 
	the cost of the $4$ recursive calls to \textsc{\HALR{}\_MatVec}. Using the induction assumption, it follows that the cost for these calls sums up to 
	$\mathcal O(S)$. 
\end{proof}

\subsection{Arithmetic operations}
We proceed by analyzing the interplay between the quad-tree cluster partitioning
and the usual matrix operations. If $A$ is a given 
$(\mathcal T, k)$-\hmatrix\, we can define the transpose of $\mathcal T$
as the natural cluster tree for $A^T$. 

\begin{definition}
	Let $m,n\in\mathbb N$ and $\mathcal T$ a quad-tree cluster for $\inters{1}{m}{1}{n}$. The \emph{transposed quad-tree cluster $\mathcal T^T$} is defined as the quad-tree cluster on $\inters{1}{n}{1}{m}$ obtained from $\mathcal T$ by: 
	\begin{enumerate}[(i)]
		\item replacing each node $I_r\times I_c$ with $I_c\times I_r$
		\item swapping the subtrees at $I_{12}$ and $I_{21}$ for every non-leaf node.
	\end{enumerate}
\end{definition}
Clearly, $A$ is $(\mathcal T,k)$-\hmatrix{} if and only if $A^T$ is 
$(\mathcal T^T,k)$-\hmatrix. 

\begin{remark}
	In the following, we want to regard a subtree of $\mathcal T$ again as a quad-tree cluster. For such a subtree to satisfy Definition~\ref{def:quadtreecluster}, we tacitly shift its root $\inters{m_1}{m_2}{n_1}{n_2}$
	to $\inters{1}{m_2-m_1}{1}{n_2-n_1}$ and, analogously, all other nodes in the subtree. In the opposite direction, when connecting a tree to a leaf of $\mathcal T$,  we shift the root (and the other nodes) of the tree such that it matches the index set of the leaf.
\end{remark}

Now, let us focus on arithmetic
operations between \HALR{} matrices. 
When dealing with binary operations, we need to ensure some compatibility
between the sizes of the hierarchical partitioning in order to unambiguously define the partitioning of the result. To this aim, we introduce the notions
of row and column compatibility, which will be used in the next section 
for characterizing matrix products and additions. 

\begin{definition} \label{def:cluster-compatibility}
	Given $m_A,n_A,m_B,n_B\in\mathbb N$, let $\mathcal T_A,\mathcal T_B$ be quad-tree clusters for $\inters{1}{m_A}{1}{n_A}$ and $\inters{1}{m_B}{1}{n_B}$, respectively.
	\begin{itemize}
		\item $\mathcal T_A$ and $\mathcal T_B$, with roots $I_A$ and $I_B$, are said to be \emph{row-compatible} if one of the following two conditions are satisfied:
		\begin{enumerate}[(i)]
			\item $\mathcal T_A$ or $\mathcal T_B$ only contains the root, and $m_A=m_B$.
			\item For every $i,j=1,2$ the subtrees at $(I_A)_{ij}$ and $(I_B)_{ij}$ are row-compatible.
		\end{enumerate}
		\item $\mathcal T_A$ and $\mathcal T_B$ are said \emph{column-compatible} if $\mathcal T_A^T$ and $\mathcal T_B^T$ are row-compatible.
		\item $\mathcal T_A$ and $\mathcal T_B$ are said \emph{compatible} if  they are both row- and column-compatible.
	\end{itemize}
\end{definition}
Intuitively, two quad-trees $\mathcal T_A$ and $\mathcal T_B$
are row (resp. column) compatible if taking the same path in $\mathcal T_A$ and $\mathcal T_B$ yields index sets with the same number of row (resp. column) indices. 

According to Definition~\ref{def:cluster-compatibility}, compatibility
does not depend on the labeling of the leaf nodes. Moreover, two clusters
can be compatible even if they have different depths (or contain subtrees of different depths). The following definition introduces a partial ordering
among compatible trees. This will be used to define the intersection between quad-tree clusters, which in turn allows us to
characterize the natural
partitioning of binary matrix operations involving  $A$ and $B$.

\begin{definition}\label{def:ordering}
	Let $\mathcal T_A,\mathcal T_B$ be compatible quad-tree clusters for $\inters{1}{m}{1}{n}$.
	We write $\mathcal T_A \leq \mathcal T_B$ if one of the following  conditions is satisfied
	\begin{enumerate}[(i)]
		\item $\mathcal T_A$ only contains the root labeled as \texttt{low-rank}.
		\item $\mathcal T_B$ only contains the root labeled as \texttt{dense}.
		\item For every $i,j=1,2$ the subtrees $(\mathcal T_A)_{ij}$ and $(\mathcal T_B)_{ij}$ at $(I_A)_{ij}$ and $(I_B)_{ij}$, respectively, verify $(\mathcal T_A)_{ij}\leq(\mathcal T_B)_{ij}$.
	\end{enumerate}
\end{definition}
The idea behind Definition~\ref{def:ordering} is that $\mathcal T_A \leq T_B$ implies that a $(\mathcal T_A,k)$-\hmatrix\ matrix has a stronger structure
than an $(\mathcal T_B, k)$-\hmatrix\ one. In fact, any $(\mathcal T_A, k)$-\hmatrix\ is also a $(\mathcal T_B, k)$-\hmatrix\ for all $\mathcal T_B \geq \mathcal T_A$. 
A low-rank matrix itself
corresponds to the format with the strongest structure. 
Based on this, we define the intersection between $\mathcal T_A$ and $\mathcal T_B$ as the strongest structure among the ones which are weaker than
both $\mathcal T_A$ and $\mathcal T_B$. 

\begin{definition} \label{def:intersection}
	Let $\mathcal T_A,\mathcal T_B$ be compatible quad-tree clusters for $\inters{1}{m}{1}{n}$.
	Their intersection $\mathcal T:=\mathcal T_A\cap\mathcal T_B $ is defined recursively as follows:
	\begin{enumerate}[(i)]
		\item If $\mathcal T_A$ (resp. $\mathcal T_B$) only contain the root labeled as \texttt{low-rank} then $\mathcal T_A\cap\mathcal T_B = \mathcal T_B$ (resp. $\mathcal T_A\cap\mathcal T_B = \mathcal T_A$). 
		\item  If $\mathcal T_A$ or $\mathcal T_B$ only contain the root labeled as \texttt{dense} then $\mathcal T_A\cap\mathcal T_B$ is a tree that only contains the root  labeled as \texttt{dense}.
		\item If $\mathcal T_A$ and $\mathcal T_B$ contain more than one node then their intersection is constructed by connecting the subtrees $\mathcal T_{ij}=(\mathcal T_A)_{ij}\cap(\mathcal T_B)_{ij}$, $i,j=1,2$, to the root $I=I_A=I_B$.
	\end{enumerate}
\end{definition}

\begin{remark} \label{rem:identity}
	The neutral element for the intersection
	is given by the quad-tree $\mathcal T$ only containing the root
	labeled as \texttt{low-rank}, that is, a low-rank matrix. 
\end{remark}

We now make use of the notions defined above to infer the
structure of $A+B$ from the ones of $A$ and $B$. 

\begin{lemma} \label{lem:sumstructure}
	Let $A,B\in\mathbb C^{m\times n}$ be $(\mathcal T_A,k_A)$-\hmatrix{} and $(\mathcal T_B,k_B)$-\hmatrix, respectively. If $\mathcal T_A, \mathcal T_B$ are compatible then $A+B$ is $(\mathcal T_A\cap\mathcal T_B,k_A+k_B)$-\hmatrix.
\end{lemma}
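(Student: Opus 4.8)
The plan is to prove the statement by induction on the combined depth of the two compatible quad-tree clusters $\mathcal T_A$ and $\mathcal T_B$, mirroring the recursive structure of Definition~\ref{def:intersection}. The inductive hypothesis will be that for any pair of compatible clusters of smaller total depth, the sum of a $(\mathcal T_A,k_A)$-\hmatrix{} and a $(\mathcal T_B,k_B)$-\hmatrix{} is a $(\mathcal T_A\cap\mathcal T_B,k_A+k_B)$-\hmatrix{}. The base cases correspond exactly to cases (i) and (ii) in the definition of the intersection, and the inductive step corresponds to case (iii).

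First I would handle the base cases. If $\mathcal T_A$ consists of a single \texttt{low-rank} root, then $A$ has rank at most $k_A$ and $\mathcal T_A\cap\mathcal T_B=\mathcal T_B$. Here I would argue that adding a rank-$k_A$ matrix to $B$ increases the rank of every \texttt{low-rank} leaf of $\mathcal T_B$ by at most $k_A$, since restricting $A$ to any submatrix cannot increase its rank; hence $A+B$ is a $(\mathcal T_B,k_A+k_B)$-\hmatrix{}. The symmetric case where $\mathcal T_B$ is a single \texttt{low-rank} root is identical. If either $\mathcal T_A$ or $\mathcal T_B$ consists of a single \texttt{dense} root, then $\mathcal T_A\cap\mathcal T_B$ is a single \texttt{dense} root, and the claim holds vacuously because a \texttt{dense} leaf imposes no rank constraint. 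Note that row- and column-compatibility guarantee $m_A=m_B$ and $n_A=n_B$ in these base cases, so the sum is well defined.

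For the inductive step, both trees have more than one node, so by compatibility both roots have four children with matching index-set sizes, and the block partitioning~\eqref{eq:partitioning} of $A$ and $B$ is conformal. I would then observe that $(A+B)_{ij}=A_{ij}+B_{ij}$ for each $i,j=1,2$, where $A_{ij}$ is a $((\mathcal T_A)_{ij},k_A)$-\hmatrix{} and $B_{ij}$ is a $((\mathcal T_B)_{ij},k_B)$-\hmatrix{}. The subtrees $(\mathcal T_A)_{ij}$ and $(\mathcal T_B)_{ij}$ are compatible (this follows from condition (ii) in the definition of row-compatibility applied recursively, together with its column analogue), so the inductive hypothesis applies and yields that $A_{ij}+B_{ij}$ is a $((\mathcal T_A)_{ij}\cap(\mathcal T_B)_{ij},k_A+k_B)$-\hmatrix{}. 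Since $\mathcal T_A\cap\mathcal T_B$ is by definition obtained by connecting the subtrees $(\mathcal T_A)_{ij}\cap(\mathcal T_B)_{ij}$ to the common root, the four blocks assemble exactly into a $(\mathcal T_A\cap\mathcal T_B,k_A+k_B)$-\hmatrix{}.

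The main obstacle I anticipate is purely bookkeeping rather than conceptual: one must verify that compatibility is inherited by the subtrees so that the intersection in the inductive step is actually defined, and that the rank bound $k_A+k_B$ on a \texttt{low-rank} leaf is tight in the right sense. The essential inequality $\operatorname{rank}(X+Y)\le\operatorname{rank}(X)+\operatorname{rank}(Y)$, restricted to each \texttt{low-rank} leaf of the intersection, is what drives the base case; everything else is a matter of confirming that the recursive constructions of $\mathcal T_A\cap\mathcal T_B$ and of the block structure of $A+B$ coincide node by node.
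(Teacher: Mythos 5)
Your proof is correct and follows essentially the same route as the paper: the paper's argument also reduces to rank subadditivity applied at each \texttt{low-rank} leaf of $\mathcal T_A\cap\mathcal T_B$, obtained by traversing that tree, which is precisely the induction you carry out explicitly. Your version merely spells out the base cases of Definition~\ref{def:intersection} and the inheritance of compatibility, which the paper leaves implicit.
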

\begin{proof}
	We recall that the sum of two matrices of rank at most $k_A$ and $k_B$, respectively, has rank at most $k_A + k_B$. The statement follows from traversing the tree $T_A\cap\mathcal T_B$; for every leaf in the tree for which both submatrices of $A$ and $B$ are low rank, the resulting submatrix in $A+B$ will have rank at most $k_A+k_B$.  
\end{proof}

It is instructive to consider two special cases. First, 
if $\mathcal T_A = \mathcal T_B$, then
$A + B$ shares the same quad-tree cluster (with higher rank). Second, in view of Remark~\ref{rem:identity}, if $A$ is low rank 
then $A + B$ has the same structure as $B$, with a rank increase by (at most)
the rank of $A$.

The proof of Lemma~\ref{lem:sumstructure} suggests a recursive procedure that is summarized in Algorithm~\ref{alg:sum}.
An inductive argument analogous to the one used for Lemma~\ref{lem:mvec-complexity} shows that the complexity of Algorithm~\ref{alg:sum} is bounded by two times the cost of storing a $(\mathcal T_A\cap\mathcal T_B,k_A+k_B)$-\HALR{} matrix. Note that this estimate can be reduced by exploiting the fact that Line~\ref{line:lowrankfact} is executed at no cost by simply appending the low-rank factors of $A,B$. For example, when $A$ is a rank-$k_A$ matrix the cost reduces to $k_A$ times the number of entries in the dense blocks of $B$, which equals the storage needed for a $(\mathcal T_B,0)$-\HALR{} matrix
\begin{algorithm}
	\begin{algorithmic}[1]
		\Procedure{\HALR{}\_Sum}{$A$, $B$}
		\If{$A$ and/or $B$ are leaf nodes labeled as \texttt{dense}}
		\State \Return the dense matrix $A+B$ 
		\ElsIf{$A$ and $B$ are leaf nodes labeled as \texttt{low-rank}}
		\State \label{line:lowrankfact} \Return a low-rank factorization of $A+B$
		\Else
		\State If $A$ (resp. $B$) is a low-rank leaf, partition it according to $B$ (resp. $A$)
		\State\Return$\begin{bmatrix}\Call{\HALR{}\_Sum}{A_{11},B_{11}}&\Call{\HALR{}\_Sum}{A_{12},B_{12}}\\ \Call{\HALR{}\_Sum}{A_{21},B_{21}}&\Call{\HALR{}\_Sum}{A_{22},B_{22}}\end{bmatrix}$	
		\EndIf
		\EndProcedure
	\end{algorithmic}
	\caption{Sum of \hmatrix{} matrices}
	\label{alg:sum}
\end{algorithm}

For a matrix product
$A\cdot B$ of \HALR{} matrices, it is natural to assume that $A^T$ and $B$ are row compatible.
Assuming that $\mathcal T_A$ and $\mathcal T_B$ denote, as usual, the quad-tree clusters
of $A$ and $B$, the matrix product $A\cdot B$ stored in the \HALR{} format
is computed  with the following procedure:
\begin{enumerate}[(i)]
	\item If $\mathcal T_A$ (resp. $\mathcal T_B$) only contains the root
	labeled as \texttt{low-rank}, then the resulting tree only contains the root labeled as \texttt{low-rank} and its factorization is obtained efficiently from the one of $A$ (resp. $B$); otherwise
	\item if $\mathcal T_A$ (resp. $\mathcal T_B$) only contains the root labeled as \texttt{dense}, then $A\cdot B$ is computed in dense arithmetic and the resulting tree only contains the root labeled as \texttt{dense}; otherwise
	\item  we partition
	\[
	A = \begin{bmatrix}
	A_{11} & A_{12} \\
	A_{21} & A_{22} \\
	\end{bmatrix}, \qquad 
	B = \begin{bmatrix}
	B_{11} & B_{12} \\
	B_{21} & B_{22} \\
	\end{bmatrix}, \qquad 
	C = AB = \begin{bmatrix}
	C_{11} & C_{12} \\
	C_{21} & C_{22} \\
	\end{bmatrix},
	\]
	determine recursively 
	$A_{ik} B_{kj}$ along with their clusters $\mathcal T_{ijk}$ 
	for $i,j,k = 1,2$, and set  $C_{ij}= A_{i1}B_{1j}+A_{i2}B_{2j}$ with cluster
	$\mathcal T_{ij1} \cap \mathcal T_{ij2}$. 
\end{enumerate}
We note that it is difficult to predict a priori the quad-tree cluster
of $AB$ because  
even if $A$ and $B$ contain many low-rank blocks, the structure may be completely lost in $AB$; see the example reported in
Figure~\ref{fig:prodbad}. 
Also computing $A\cdot B$ may cost significantly  more than the storage cost of the outcome, e.g., when $A$ and $B$ are dense matrices.
On the other hand, in Section~\ref{sec:hodlr-blr} we will show that if one of the two
factors happens to be a HODLR matrix then the cost of computing  $A\cdot B$ and its quad-tree 
structure are predictable.

\begin{figure}[h!]
	\centering 
	\includegraphics[scale=.8]{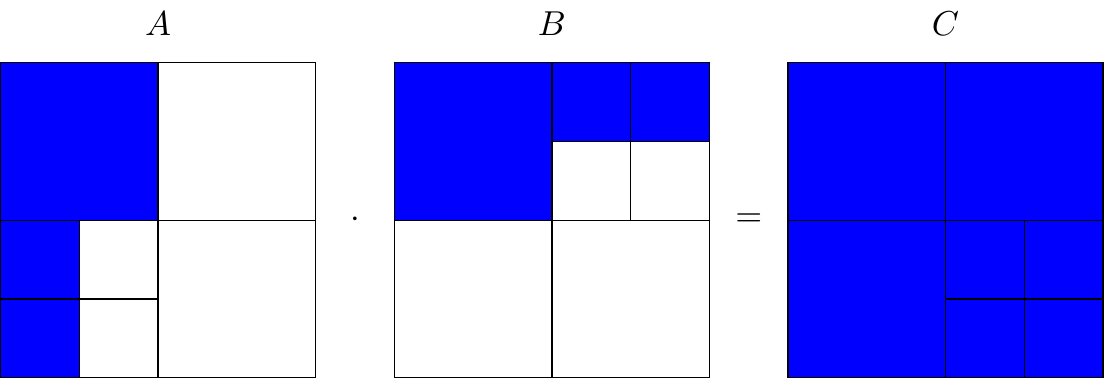}
	\caption{Example of loss of structure when computing the matrix-matrix
		multiplication. The blue region correspond to nodes labeled as \texttt{dense}, and the empty regions to nodes labeled as \texttt{low-rank}.}
	\label{fig:prodbad}
\end{figure} 

\subsection{HODLR matrices}\label{sec:hodlr-blr}

HODLR matrices are special cases of \HALR{} matrices; all the off-diagonal blocks have low rank.
To formalize this notion, we adopt the definition given
in \cite{kressner2019low}, rephrased in the formalism of quad-tree clusters. 

\begin{definition}
	A quad-tree cluster $\mathcal T_p^{(H)}$ of depth $p$ is said to be a 
	\emph{HODLR} cluster if either $p = 1$ and 
	$\mathcal T_p^{(H)}$ only contains the root labeled as \texttt{dense}, or if the children $I_{ij}$ at the root of $\mathcal T_p^{(H)}$ satisfy:
	\begin{itemize}
		\item $I_{12}$ and $I_{21}$ are leaf nodes
		labeled as \texttt{low-rank}.
		\item the subtrees at $I_{11}$ and $I_{22}$ are HODLR clusters 
		of depth $p - 1$. 
	\end{itemize}
	We say that a matrix is $(\mathcal T_p^{(H)}, k)$-HODLR if it is 
	$(\mathcal T_p^{(H)}, k)$-\hmatrix. The smallest integer $k$ for which
	a matrix $A$ is $(\mathcal T_p^{(H)}, k)$-HODLR is called the
	HODLR rank of $A$. 
\end{definition}
An example of a HODLR cluster is reported in Figure~\ref{fig:hodlrcluster}. 
A crucial property of HODLR matrices is that they are block diagonal up to a low-rank correction. This allows to predict the structure of a product of \hmatrix{} matrices whenever one of the factors is, in fact, a HODLR matrix.  

\begin{figure}[h!]
	\centering
	\includegraphics[scale=.8]{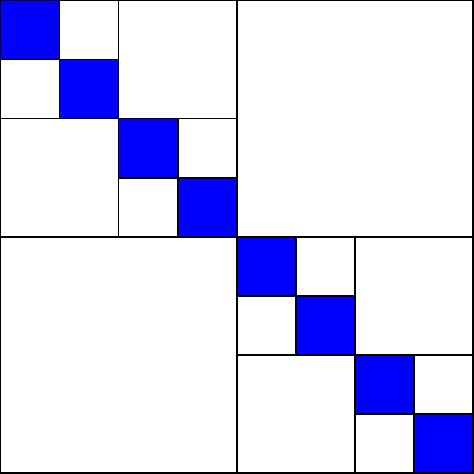}	
	\caption{Example of partitioning induced by a HODLR-cluster of depth $3$.}\label{fig:hodlrcluster}
\end{figure}

\begin{lemma}\label{lem:mat-mul}
	Let $A\in\mathbb C^{m\times m}$ be a $(\mathcal T^{(H)}_{p_A},k_A)$-HODLR matrix and $B\in\mathbb C^{m\times n}$ be a $(\mathcal T,k_B)$-\hmatrix{} matrix of depth $p_B$. If $A^T$ and $B$ are row compatible and $p_A \geq p_B$, then $A\cdot B$ is a  $(\mathcal T,k_B + (p_A-1)\cdot k_A)$-\hmatrix{} matrix.
	Similarly, if $C$ is an $n \times n$ 
	$(\mathcal T_{p_C}^{(H)}, k_C)$-HODLR matrix and $p_C \geq p_B$ then $B\cdot C$ is a 
	$(\mathcal T, k_B + (p_C-1)\cdot k_C)$-\hmatrix{} matrix
\end{lemma}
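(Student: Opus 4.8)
The plan is to proceed by induction on the HODLR depth $p_A$, exploiting the defining feature of a HODLR cluster: at the top level $A$ splits as a block-diagonal matrix whose diagonal blocks are again HODLR (of depth $p_A-1$) plus a correction whose two nonzero off-diagonal blocks each have rank at most $k_A$. Concretely, I would write
\begin{equation*}
A=\begin{bmatrix} A_{11} & A_{12}\\ A_{21}&A_{22}\end{bmatrix},
\end{equation*}
where $A_{11},A_{22}$ are $(\mathcal T^{(H)}_{p_A-1},k_A)$-HODLR and $A_{12},A_{21}$ have rank at most $k_A$, treating the two off-diagonal blocks as the neutral (single \texttt{low-rank} root) structure of Remark~\ref{rem:identity}.

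For the base case $p_A=1$ the matrix $A$ is dense and, since $p_A\geq p_B$, the factor $B$ is a single leaf: if it is \texttt{low-rank} then $AB$ has rank at most $k_B$, and if it is \texttt{dense} then $AB$ is dense; in both cases $AB$ is $(\mathcal T,k_B)$-\hmatrix{}, matching the bound $k_B+(p_A-1)k_A=k_B$. For the inductive step with $p_A\geq 2$, if $B$ is a leaf the same elementary observation applies; otherwise I would partition $B$ compatibly with $A$ (this is legitimate because row compatibility of $A^T$ and $B$ is inherited by the four pairs of subtrees, by Definition~\ref{def:cluster-compatibility}) and expand each block of the product as
\begin{equation*}
(AB)_{ij}=A_{ii}B_{ij}+A_{i,3-i}B_{3-i,j},\qquad i,j\in\{1,2\}.
\end{equation*}
Here $A_{ii}$ is HODLR of depth $p_A-1$ and $B_{ij}$ is $(\mathcal T_{ij},k_B)$-\hmatrix{} of depth at most $p_B-1\leq p_A-1$, so the induction hypothesis yields that $A_{ii}B_{ij}$ is $(\mathcal T_{ij},k_B+(p_A-2)k_A)$-\hmatrix{}. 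The remaining summand $A_{i,3-i}B_{3-i,j}$ is a product with a rank-$\leq k_A$ factor, hence of rank at most $k_A$.

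The rank accounting is the delicate point, and it is where the specific constant $(p_A-1)k_A$ (rather than something larger) comes from: each block of $AB$ involves \emph{exactly one} diagonal HODLR block of $A$ and \emph{exactly one} off-diagonal low-rank block, so there is no double counting along the recursion. Adding the rank-$\leq k_A$ term to the $(\mathcal T_{ij},k_B+(p_A-2)k_A)$-\hmatrix{} term via Lemma~\ref{lem:sumstructure}, and using that the single \texttt{low-rank} root is neutral for the intersection (Remark~\ref{rem:identity}, Definition~\ref{def:intersection}), shows that $(AB)_{ij}$ is $(\mathcal T_{ij},k_B+(p_A-1)k_A)$-\hmatrix{}. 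Connecting the four subtrees $\mathcal T_{ij}$ to the root reconstructs $\mathcal T$ and gives the claim.

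Finally, the statement for $B\cdot C$ would follow by transposition: since the transpose of a HODLR cluster is again HODLR and $(\mathcal T^T)^T=\mathcal T$, applying the first part to $C^T B^T$ and transposing back yields that $B\cdot C$ is $(\mathcal T,k_B+(p_C-1)k_C)$-\hmatrix{}. The main obstacle is thus not any single hard estimate but the careful bookkeeping ensuring that at each level only one factor of $k_A$ is accrued and that the partitions remain compatible throughout the recursion.
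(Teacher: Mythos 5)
Your proof is correct and follows essentially the same route as the paper: induction on $p_A$, splitting $A$ into its block-diagonal HODLR part and its rank-$\leq k_A$ off-diagonal part, applying the induction hypothesis to the former and Lemma~\ref{lem:sumstructure} (with the neutral low-rank root of Remark~\ref{rem:identity}) to absorb the latter, and obtaining the $B\cdot C$ statement by transposition. The only difference is cosmetic: the paper performs the split globally as $AB=M_D+M_O$, whereas you carry out the identical decomposition blockwise via $(AB)_{ij}=A_{ii}B_{ij}+A_{i,3-i}B_{3-i,j}$.
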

\begin{proof}
	We prove only the first statement, the second can be obtained by 
	transposition. We proceed by induction on $p_A$; if 
	$p_A = 1$, then $A$ is composed of a single dense block. Since $p_B \leq p_A$, $B$ is also composed of 
	a single block, either labeled as \texttt{low-rank} or \texttt{dense}. 
	Both structures are preserved when multiplying with $A$. 
	
	Suppose that the claim is valid for $p_A - 1 \geq 1$. If $\mathcal T$ 
	is composed of a single node, the claim is valid. Otherwise, by decomposing $A$ in its diagonal and off-diagonal parts,
	we may write
	\begin{equation}\label{eq:hodlr-split}
	\begin{bmatrix}
	A_{11} & A_{12} \\
	A_{21} & A_{22} \\
	\end{bmatrix} 	  \begin{bmatrix}
	B_{11} & B_{12} \\
	B_{21} & B_{22} \\
	\end{bmatrix} = 
	\underbrace{\begin{bmatrix}
		A_{11} B_{11} & A_{11} B_{12} \\
		A_{22} B_{21} & A_{22} B_{22} \\
		\end{bmatrix}}_{M_D} + \underbrace{\begin{bmatrix}
		A_{12} B_{21} & A_{12} B_{22} \\
		A_{21} B_{11} & A_{21} B_{12} \\
		\end{bmatrix}}_{M_O}. 
	\end{equation}
	In view of the induction step, each block of $M_D$ is a 
	$(\mathcal T_{ij}, k_B + (p_A - 2) k_A)$-\hmatrix{} matrix, for $i,j = 1,2$, 
	where $\mathcal T_{ij}$ are the quad-tree clusters associated with
	the first level of $\mathcal T$. In particular, $M_D$ is 
	$(\mathcal T, k_B + (p_A - 2) k_A)$-\hmatrix. Finally, note that all the
	blocks of $M_O$ have rank bounded by $k_A$, and therefore 
	$M_O$ is $(\mathcal T, k_A)$-\hmatrix. 
	We conclude that $AB = M_D + M_O$ is 
	$(\mathcal T, k_B + (p_A - 1) k_A)$-\hmatrix. 
\end{proof}
The complexity of multiplying by a HODLR matrix can be bounded in terms of the storage of the other factor.
\begin{lemma} \label{lemma:matrixmult}
	Under the assumptions of Lemma~\ref{lem:mat-mul} the cost of computing $A\cdot B$ is $\mathcal O(S(n_{\min}+k_A(p_A-1)))$ where $S$ is the storage cost of $B$ and $n_{\min}$ is an upper bound on the size of the dense diagonal blocks of $A$.
\end{lemma}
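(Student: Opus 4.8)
The plan is to prove the statement by induction on the depth $p_A$ of the HODLR factor, following the same $M_D+M_O$ splitting used in the proof of Lemma~\ref{lem:mat-mul}. Writing $S$ for the storage of $B$, I aim to exhibit a constant $C$, independent of $p_A$, such that the cost of forming $A\cdot B$ is at most $C\,S\,(n_{\min}+k_A(p_A-1))$; the $\mathcal O(\cdot)$ in the statement is then this bound.

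Two base situations must be handled. If $p_A=1$ then $A$ is a single dense block of size at most $n_{\min}$ and, since $p_B\leq p_A$, the factor $B$ is a single leaf; forming $(AU_B)V_B^T$ for a low-rank $B$ or $AB$ directly for a dense $B$ costs $\mathcal O(n_{\min}S)$, matching the claim because $k_A(p_A-1)=0$. The same estimate is required whenever $B$ collapses to a single leaf (that is, $p_B=0$) while $p_A\geq 2$. Here I would first record the storage bound $S_A=\mathcal O(m(n_{\min}+k_A(p_A-1)))$ for a $(\mathcal T^{(H)}_{p_A},k_A)$-HODLR matrix, obtained by summing $\mathcal O(mk_A)$ over the $p_A-1$ levels that carry off-diagonal low-rank blocks and $\mathcal O(mn_{\min})$ over the dense diagonal leaves. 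Then, by Lemma~\ref{lem:mvec-complexity}, the product reduces to one HALR matrix-vector product with $A$ per column of the active factor of $B$, costing $\mathcal O(S_A\,k_B)$ for a low-rank $B$ and $\mathcal O(S_A\,n)$ for a dense $B$; both are $\mathcal O(S(n_{\min}+k_A(p_A-1)))$ after using $S\geq mk_B$, respectively $S=mn$.

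For the inductive step with $p_A\geq 2$ and $B$ subdivided, I would split $A\cdot B=M_D+M_O$ as in \eqref{eq:hodlr-split} and bound three contributions. The diagonal part $M_D$ is assembled from the four products $A_{11}B_{1j}$ and $A_{22}B_{2j}$, each a depth-$(p_A-1)$ HODLR matrix times a sub-block of $B$; applying the induction hypothesis and using that the storages of the four sub-blocks of $B$ add up exactly to $S$ (internal HALR nodes store nothing), these sum to $C\,S\,(n_{\min}+k_A(p_A-2))$. For the off-diagonal part $M_O$, the blocks $A_{12},A_{21}$ have rank at most $k_A$, so each product $A_{12}B_{2j}=U(W^TB_{2j})$ is computed by retaining $U$ and evaluating $W^TB_{2j}$ as $k_A$ matrix-vector products with $B_{2j}^T$, which by Lemma~\ref{lem:mvec-complexity} costs $\mathcal O(k_AS_{B_{2j}})$ and sums to $\mathcal O(k_AS)$. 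Finally, each addition $C_{ij}=(M_D)_{ij}+(M_O)_{ij}$ adds a rank-$k_A$ matrix to a HALR matrix; by the cost analysis following Lemma~\ref{lem:sumstructure} this appends low-rank factors for free and updates only the dense leaves at $k_A$ flops per entry, for a total of $\mathcal O(k_AS)$.

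Collecting the three contributions yields a cost at most $C\,S\,(n_{\min}+k_A(p_A-2))+c'k_AS$, where $c'$ is an absolute constant coming from the $M_O$ evaluation and the summation. The step I expect to require the most care is closing the induction with the correct depth factor: since $k_A(p_A-2)+c'k_A\leq k_A(p_A-1)$ as soon as $C\geq c'$, choosing $C$ large enough (and large enough to cover the base cases) reproduces exactly the bound $\mathcal O(S(n_{\min}+k_A(p_A-1)))$. The linear-in-$p_A$ growth appears precisely because the non-recursive work at each of the $p_A-1$ recursion levels is only $\mathcal O(k_AS)$ rather than growing with the depth; I would therefore double-check that $c'$ is genuinely independent of $p_A$ and of the recursion level, and that the per-block storages telescope so that $\sum_{ij}S_{B_{ij}}=S$.
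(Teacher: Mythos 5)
Your proof is correct and follows essentially the same route as the paper's: induction on $p_A$ using the $M_D+M_O$ splitting of \eqref{eq:hodlr-split}, with the off-diagonal products and the additions each bounded by $\mathcal O(k_A S)$ via Lemma~\ref{lem:mvec-complexity} and the discussion after Lemma~\ref{lem:sumstructure}, and the induction closed by choosing the hidden constant large enough to absorb the per-level $\mathcal O(k_A S)$ work. Your explicit handling of the case where $B$ is a single leaf while $p_A\geq 2$ is a welcome extra detail that the paper's proof leaves implicit.
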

\begin{proof}
	For $p_A=1$, $A$ is a square matrix of size at most $n_{\min}$ while $B$ has low rank or is dense. In both cases, it directly follows that the cost of multiplication is $\mathcal O(Sn_{\min})$.
	
	For the induction step, we recall the splitting~\eqref{eq:hodlr-split} of $A\cdot B$ into two terms $M_D$ and $M_O$. The term $M_O$ is a product between $B$ and a matrix of rank (at most) $2k_A$, which, according to Lemma~\ref{lem:mvec-complexity}, requires $c_v Sk_A$ operations for some constant $c_v$. 
	The term $M_D$ consists of four products $A_{ii}B_{ij}$, where $A_{ij}$ is a HODLR matrix of depth $p_A-1$ and $B_{ij}$ is a \HALR{} matrix. 
	By induction, there is a constant $c\ge c_v+2$ such that the cost for each of these four multiplications is bounded by $c S_{ij}(n_{\min}+k_A(p_A-2))$ operations, where $S_{ij}$ denotes the storage cost of $B_{ij}$. Adding the corresponding rank-$k_A$ submatrix of $M_O$ requires at most $2  S_{ij} k_A$ operations, as discussed after Lemma~\ref{lem:sumstructure}.
	Therefore, the cost for computing the block $(i,j)$ of the product $A\cdot B$ is bounded by
	$c S_{ij}(n_{\min}+k_A(p_A-1))$.
	Summing over $i,j$ concludes the proof.
\end{proof}
\begin{remark}
	When performing arithmetic operations between HALR matrices, or HODLR and HALR matrices, it is often observed that the numerical rank of the blocks in the outcome is significantly less than the worst case scenario depicted in  Lemma~\ref{lem:sumstructure} and \ref{lemma:matrixmult}. Hence, it is advisable to perform a recompression stage, see \cite[Algorithm 2.17, p. 33]{hackbusch}, when expanding low-rank factorizations, such as in line \ref{line:lowrankfact} of Algorithm~\ref{alg:sum}. 
\end{remark}
\subsection{Solving Sylvester equations with HODLR coefficients and \hmatrix\ right-hand-side}

\label{sec:lyap}
As pointed out in the introduction, when dealing with PDEs defined 
on a rectangular two-dimensional domain, one frequently encounters linear matrix equations 
of the form
\begin{equation} \label{eq:sylvester}
AX + XB = C, 
\end{equation}
with square matrices $A,B$ and a right-hand side $C$ of matching size. To simplify the discussion we will assume that $A$ and $B$ are of equal size $n$. As $A,B$
stem from the discretization of a 1D differential 
operator, they are typically $(\mathcal T_p^{(H)}, k)$-HODLR for some small $k$.
In contrast to our previous work~\cite{kressner2019low}, where we assumed $C$ to be HODLR as well, we now consider the more general setting when $C$ is $(\mathcal T, k_C)$-\hmatrix. In the following, we require that 
$\mathcal T_p^{(H)}$ is compatible with $\mathcal T$ and $p \geq p_C$, where $p_C$ denotes the depth of $\mathcal T$. 

The particular case when $C$ is a dense matrix  will be discussed in 
further detail in Section~\ref{sec:DaC-dense}. For the moment, we let
\textsc{DenseRHS\_Sylv} denote the algorithm chosen for this case. If, instead, $C$ is low-rank, well-studied 
low-rank solvers are available, such as Krylov subspace methods and ADI (see~\cite{Simoncini2016} for a survey).
Under suitable conditions on the spectra of $A$ and $B$ and given a low-rank factorization of $C$, these solvers
return an approximation to the solution $X$ in factorized low-rank format. 
Since the specific choice of the low-rank solver is not crucial
for the following discussion, we refer
to this routine as \textsc{LowRankRHS\_Sylv}.

The equation~\eqref{eq:sylvester} can be solved recursively using an extension of our divide-and-conquer approach~\cite{kressner2019low} for HODLR matrices $C$.
If $\mathcal T$ only contains the root and, hence, $C$ is composed of a single block, we use either \textsc{DenseRHS\_Sylv} (if $C$ is \texttt{dense}) or \textsc{LowRankRHS\_Sylv}
(if $C$ is \texttt{low-rank}). 
Otherwise, we partition \eqref{eq:sylvester} according to the four children of the root of $\mathcal T$:
\[
\begin{bmatrix}
A_{11} & A_{12} \\
A_{21} & A_{22} \\
\end{bmatrix} \begin{bmatrix}
X_{11} & X_{12} \\
X_{21} & X_{22} \\
\end{bmatrix} + 
\begin{bmatrix}
X_{11} & X_{12} \\
X_{21} & X_{22} \\
\end{bmatrix} \begin{bmatrix}
B_{11} & B_{12} \\
B_{21} & B_{22} \\
\end{bmatrix} = 
\begin{bmatrix}
C_{11} & C_{12} \\
C_{21} & C_{22} \\
\end{bmatrix}. 
\]
In the spirit of \cite{kressner2019low}, we first solve the equation 
associated with the diagonal blocks of $A$ and $B$:
\begin{equation} \label{eq:diagsylv}
\begin{bmatrix}
A_{11} & 0 \\
0 & A_{22} \\
\end{bmatrix} \begin{bmatrix}
\tilde X_{11} & \tilde X_{12} \\
\tilde X_{21} & \tilde X_{22} \\
\end{bmatrix} + 
\begin{bmatrix}
\tilde X_{11} & \tilde X_{12} \\
\tilde X_{21} & \tilde X_{22} \\
\end{bmatrix} \begin{bmatrix}
B_{11} & 0 \\
0 & B_{22} \\
\end{bmatrix} = 
\begin{bmatrix}
C_{11} & C_{12} \\
C_{21} & C_{22} \\
\end{bmatrix}, 
\end{equation}
which is equivalent to solving the four decoupled equations
\begin{equation} \label{eq:Xij}
A_{ii} \tilde X_{ij} + \tilde X_{ij} B_{jj} = C_{ij}, \qquad 
i,j = 1, 2, \qquad 
\tilde X := \begin{bmatrix}
\tilde X_{11} & \tilde X_{12} \\
\tilde X_{21} & \tilde X_{22} \\
\end{bmatrix}, 
\end{equation}
where, by recursion, $\tilde X$ can be represented in the $\mathcal T$-\hmatrix\ format. 
Letting $\delta X := X - \tilde X$ and subtracting \eqref{eq:diagsylv} from
\eqref{eq:sylvester}, we obtain
\[
A \delta X + \delta X B = - \begin{bmatrix}
0 & A_{12} \\
A_{21} & 0 
\end{bmatrix} \tilde X - \tilde X \begin{bmatrix}
0 & B_{12} \\
B_{21} & 0 
\end{bmatrix}, 
\]
which is a Sylvester equation with right-hand-side of rank at most $4k$. In turn, $\delta X$ is computed using \textsc{LowRankRHS\_Sylv}, 
and $X = \tilde X + \delta X$ is retrieved 
performing a low-rank update. 
Note that the Sylvester equations in \eqref{eq:Xij} have again HODLR coefficients
and \hmatrix\ right-hand-side, with the depth decreased by one. Applying this step recursively yields the divide-and-conquer scheme 
reported in 
Algorithm~\ref{alg:dac-hblr}. 
Note that, the approximate solution returned by Algorithm~\ref{alg:dac-hblr}
retains the \hmatrix\ format, with the 
quad-tree cluster $\mathcal T$ inherited from $C$. 

\begin{algorithm}
	\begin{algorithmic}[1]
		\Procedure{D\&C\_Sylv}{$A, B, C$}
		\If{$p_C = 1$}
		\If{$C$ is \texttt{low-rank}}
		\State \Return \Call{LowRankRHS\_Sylv}{$A, B, C$} \label{lrslv1}
		\Else
		\State \Return \Call{DenseRHS\_Sylv}{$A, B, C$} \label{denseslv}
		\EndIf
		\Else
		\For{$i,j = 1, 2$}
		\State $\tilde X_{ij} \gets \Call{D\&C\_Sylv}{A_{ii}, B_{jj}, C_{ij}$}
		\EndFor
		\State $\tilde C \gets - \begin{bmatrix}
		0 & A_{12} \\
		A_{21} & 0 
		\end{bmatrix} \tilde X - \tilde X \begin{bmatrix}
		0 & B_{12} \\
		B_{21} & 0 
		\end{bmatrix}$ \label{line:lrrhs2}
		\State $\delta X \gets \Call{LowRankRHS\_Sylv}{A, B, \tilde C}$ \label{lrslv2}
		\State \Return $\begin{bmatrix}
		\tilde X_{11} & \tilde X_{12} \\
		\tilde X_{21} & \tilde X_{22} \\
		\end{bmatrix} + \delta X$ \label{xass1}
		\EndIf
		\EndProcedure
	\end{algorithmic}
	\caption{Divide-and-conquer approach for solving
		$AX + XB = C$. }
	\label{alg:dac-hblr}
\end{algorithm}

In practice, \textsc{LowRankRHS\_Sylv} in lines~\ref{lrslv1} and~\ref{lrslv2}
uses low-rank factorizations of the matrices
$C$ and $\tilde C$, and returns the solutions in factorized form.  The low-rank factors of $C$ at line~\ref{lrslv1} are given as $C$ is a leaf node of an HALR matrix. At line~\ref{lrslv1} they are easily retrieved using the low-rank factorizations of the off-diagonal blocks of $A$ and $B$ that are stored in their HODLR representations; see \cite[Section 3.1]{kressner2019low} for more details.  When $X$ is assembled by its blocks in line \ref{xass1}, 
an \hmatrix\ structure with the appropriate tree is created.

\subsubsection{Sylvester equation with dense right-hand-side}\label{sec:DaC-dense}

\begin{algorithm}
	\begin{algorithmic}[1]
		\Procedure{DenseRHS\_Sylv}{$A, B, C$}
		\If{$p_A = p_B = 1$}
		\State \Return \Call{DenseSolver\_Sylv}{$A, B, C$} \label{eq:densesylv}
		\Else
		\State Partition $C$ according to the partitioning of $A,B$:
		\State $C \gets \begin{bmatrix}
		C_{11} & C_{12} \\
		C_{21} & C_{22} \\
		\end{bmatrix} \qquad $ $C_{ij}$ \texttt{dense} for all $i,j$
		
		\For{$i,j = 1, 2$}
		\State $\tilde X_{ij} \gets \Call{DenseRHS\_Sylv}{A_{ii}, B_{jj}, C_{ij}$}
		\EndFor
		\State $\tilde C \gets - \begin{bmatrix}
		0 & A_{12} \\
		A_{21} & 0 
		\end{bmatrix} \tilde X - \tilde X \begin{bmatrix}
		0 & B_{12} \\
		B_{21} & 0 
		\end{bmatrix}$ \label{line:lrrhs}
		\State $\delta X \gets \Call{LowRankRHS\_Sylv}{A, B, \tilde C}$ \label{line:lw-sylv}
		\State \Return $\begin{bmatrix}
		\tilde X_{11} & \tilde X_{12} \\
		\tilde X_{21} & \tilde X_{22} \\
		\end{bmatrix} + \delta X$
		\EndIf
		\EndProcedure
		\caption{}
		\label{alg:densesylv}
	\end{algorithmic}	
\end{algorithm}

We now consider the solution of a Sylvester equation~\eqref{eq:sylvester} with dense $C$ and HODLR coefficients $A,B$.  This is needed in
Line~\ref{denseslv} of Algorithm~\ref{alg:dac-hblr}, but it may also be of independent interest.

For small $n$ (say, $n \leq 200$), it is most efficient to convert $A$ and $B$ to dense matrices, and use a standard  dense solver, such as the Bartels-Stewart method or RECSY~\cite{Jonsson2002}, requiring $\mathcal O(n^3)$ operations. 

For large $n$, we will see that it is more efficient to use a recursive approach instead of a dense solver. For this purpose, we partition $C$ into a block matrix in accordance with the row partition of $A$ and the column partition of $B$.
More specifically, if the size of the 
minimal blocks in the partitioning of $A$ and $B$ is $n_{\min}$ and 
$n = 2^p n_{\min}$, we represent $C$ as a $\frac{n}{n_{\min}} \times \frac{n}{n_{\min}}$ block matrix, that is, a
$(\mathcal T, 0)$-\hmatrix\ of depth $p$ with all leaf nodes labeled as \texttt{dense}. Then~\eqref{eq:sylvester} is solved recursively in analogy to Algorithm~\ref{alg:dac-hblr}. The resulting procedure is summarized in Algorithm~\ref{alg:densesylv}, where \textsc{DenseSolver\_Sylv} indicates the standard  dense solver. 

\subsubsection{Complexity analysis of the D\&C Sylvester solvers}
\label{sec:dac-complexity}

In order to perform a complexity analysis we need to make a simplifying 
assumption on the convergence of the low-rank Sylvester solver, which usually depends
on several features of the problem, such as the spectrum of $A$ and $B$.

\begin{paragraph}{\textbf{Assumption 1}}
	The computational cost of \textsc{LowRankRHS\_Sylv} for $AX + XB = C$ is 
	$\mathcal O(k_C k n \log n + k^2 n \log^2 n)$, where $n$ is the size of $A,B$, and $k$ their HODLR rank, and $k_C$ is the rank of $C$. The rank of $X$ is $O(k_C)$.
\end{paragraph} 

Assumption 1 is satisfied, for example, if the extended Krylov subspace me\-thod~\cite{Simoncini2007} con\-verges to fixed (high) accuracy in $\mathcal O(1)$ iterations and the LU factors of $A$ and $B$ are HODLR matrices of HODLR rank $\mathcal O(k)$\footnote{This is the case when $A$ and $B$ are endowed with stronger structures like hierarchical semiseparability (HSS) \cite{xia2010fast}.}. This requires the solution of a linear system with $A$ and $B$ in each iteration, via precomputing accurate approximations of the LU decompositions of $A$, $B$ at the beginning with cost $\mathcal O(k^2n\log^2n)$. In other situations, e.g., when the number of steps and/or the number of linear systems per step depend logarithmically on $n$ in order to reach a fixed accuracy, Assumption 1 and the following discussion can be easily adjusted by adding $\log n$ factors. 

Before analyzing the more general Algorithm~\ref{alg:dac-hblr}, 
it is instructive to first focus on Algorithm~\ref{alg:densesylv}. 
We note that Algorithm~\ref{alg:densesylv} solves $(\frac{n}{n_{\min}})^2$ dense 
Sylvester equations of size $n_{\min}$ and, at each level $j = 0, \ldots, p-1$, as well as $4^j$ Sylvester equations of size $\frac{n}{2^j}$ and with right-hand-sides of rank at most $4k$.
In addition, computing the low-rank factorization at line \ref{line:lrrhs} requires
$\mathcal O(\frac{n^2}{4^j} k)$ operations, amounting to a total cost of $\mathcal O(n^2)$. Under Assumption 1, \textsc{LowRankRHS\_Sylv} solves the equations at level $j$
with a cost bounded by 
$\mathcal O(\frac{k^2 n\log^2n}{2^j})$. Hence,
the total computational cost is
$\mathcal O(n^2 (n_{\min} + k^2 \log^2n))$. For large $n$
and moderate $k$, we can therefore expect that 
Algorithm~\ref{alg:densesylv} is faster than a dense solver of complexity $\mathcal O(n^3)$.

The following lemma estimates the cost of Algorithm~\ref{alg:dac-hblr} for a general \hmatrix{} matrix $C$, which reduces to our previous estimates in the two extreme cases: 
$\mathcal O(k_C k n \log n + k^2 n \log^2n )$ if $C$ is \texttt{low-rank}
and 
$\mathcal O(n^2 n_{\min} + k^2 n^2 \log^2 n)$ algorithm if $C$ is \texttt{dense}.

\begin{lemma} \label{lem:lyapcost}
	Consider the Sylvester equation $AX + XB = C$
	with $(\mathcal T_p^{(H)}, k)$-HODLR matrices $A,B \in \mathbb C^{n\times n}$ and
	a $(\mathcal T, k_C)$-\hmatrix{} matrix $C \in \mathbb C^{n\times n}$, with a quad-tree cluster $\mathcal T$ that is compatible with $\mathcal T_p^{(H)}$ and has depth $p_C \le p$.
	Suppose that $p\sim\log(n)$, 
	and let $n_{\min}$ denote the size of minimal blocks in $A, B$.
	If Assumption~1 holds, then the cost of Algorithm~\ref{alg:dac-hblr} for computing 
	the solution $X$ is $\mathcal O(S (n_{\min}  + k^2 \log^2 n))$, where
	$S$ is the storage required for $C$. 
\end{lemma}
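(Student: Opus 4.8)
The plan is to prove Lemma~\ref{lem:lyapcost} by induction on the depth $p_C$ of the quad-tree cluster $\mathcal T$ associated with $C$, establishing that the cost is bounded by $c\,S(n_{\min}+k^2\log^2 n)$ for some constant $c$. The key observation tying the complexity to the storage $S$ is that each recursive call of \textsc{D\&C\_Sylv} produces a solution block whose storage is comparable to that of the corresponding right-hand-side block. I would first record the two base cases, which are exactly the extreme situations already computed in the text preceding the lemma: when the root is a single \texttt{low-rank} leaf, Assumption~1 gives cost $\mathcal O(k_C k n\log n + k^2 n\log^2 n)$, and since $S = \mathcal O(k_C n)$ here this matches $\mathcal O(S\,k^2\log^2 n)$ after absorbing the $k\log n \le k^2\log^2 n$ factor; when the root is \texttt{dense}, the call invokes \textsc{DenseRHS\_Sylv}, whose cost was argued to be $\mathcal O(n^2(n_{\min}+k^2\log^2 n)) = \mathcal O(S(n_{\min}+k^2\log^2 n))$, using $S=\mathcal O(n^2)$ for a dense block.

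For the inductive step, suppose $\mathcal T$ has depth $p_C\ge 1$ and the claim holds for all compatible clusters of smaller depth. I would partition the cost of the non-leaf branch of Algorithm~\ref{alg:dac-hblr} into three contributions. First, the four recursive calls \textsc{D\&C\_Sylv}$(A_{ii},B_{jj},C_{ij})$ each act on a right-hand side $C_{ij}$ of storage $S_{ij}$ with HODLR coefficients of depth $p-1$ and minimal block size still $n_{\min}$; by the induction hypothesis their total cost is $\sum_{i,j} c\,S_{ij}(n_{\min}+k^2\log^2 n)$. Second, forming the low-rank correction $\tilde C$ at line~\ref{line:lrrhs2} amounts to multiplying $\tilde X$ by the off-diagonal parts of $A$ and $B$, which have rank at most $k$; by Lemma~\ref{lem:mvec-complexity} (or the multiplication bound of Lemma~\ref{lemma:matrixmult}) this costs $\mathcal O(S\,k)$, where $S=\sum_{i,j}S_{ij}$ is the storage of $C$. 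Third, \textsc{LowRankRHS\_Sylv} at line~\ref{lrslv2} solves a Sylvester equation of size $n$ with a right-hand side of rank $\mathcal O(k)$, so by Assumption~1 it costs $\mathcal O(k^2 n\log^2 n)$, and the final low-rank update in line~\ref{xass1} is absorbed into this.

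The crux of the argument, and where I expect the main obstacle to lie, is showing that the terms arising at the current level — namely the $\mathcal O(S k)$ for assembling $\tilde C$ and the $\mathcal O(k^2 n\log^2 n)$ for the top-level low-rank solve — can be charged against $S(n_{\min}+k^2\log^2 n)$ without inflating the constant $c$ across the $p_C\sim\log n$ levels of recursion. The assembly term is harmless since $S k \le S\,k^2\log^2 n$. The delicate point is the top-level solve: its cost $k^2 n\log^2 n$ must be dominated by the storage $S$ of the \emph{current} right-hand side times $k^2\log^2 n$, i.e.\ one needs $n \lesssim S$. This holds because any $(\mathcal T,k_C)$-\hmatrix{} whose cluster is non-trivial (not a single low-rank leaf) and compatible with a depth-$p$ HODLR tree has storage at least of order $n$ — each of the $n/n_{\min}$ rows of minimal blocks contributes at least $\Omega(n_{\min})$ entries, or there is at least one off-diagonal low-rank factor of size $\Omega(n)$. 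I would make this lower bound precise and then choose $c$ large enough at the outset so that, summing the four inductive contributions $\sum_{i,j} c\,S_{ij}(n_{\min}+k^2\log^2 n)$ together with the per-level overhead $\mathcal O(S(n_{\min}+k^2\log^2 n))$, the total stays below $c\,S(n_{\min}+k^2\log^2 n)$, exactly as in the constant-bookkeeping argument used in the proof of Lemma~\ref{lemma:matrixmult}. Finally, summing over all levels and invoking $p\sim\log n$ confirms the claimed bound $\mathcal O(S(n_{\min}+k^2\log^2 n))$.
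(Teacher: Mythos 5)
Your proposal is correct and follows essentially the same route as the paper: induction on the depth $p_C$, the same two base cases settled by the discussion preceding the lemma, and the same decomposition of the recursive step into the four block solves, the assembly of the low-rank correction, the top-level low-rank Sylvester solve, and the final update, with the level-$p_C$ overhead charged against $S(n_{\min}+k^2\log^2 n)$ via a lower bound on $S$ (the paper uses $pn=\mathcal O(S)$ where you use $n\lesssim S$, which covers the same charging step). The only cosmetic difference is that the paper explicitly tracks the growth of the storage of $\tilde X$ to $\mathcal O(S+kpn)$ from the accumulated low-rank updates, a point you gloss over but which does not change the bound.
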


\begin{proof}
	We prove the result by induction on $p_C$. For $p_C = 1$, the result holds following the discussion above, because $S = n^2$ if $C$ is \texttt{dense} and 
	$S = 2 k_C n$ if $C$ is \texttt{low-rank}.
	
	As the induction step is similar to the proof of Lemma~\ref{lemma:matrixmult}, we will keep it briefer. When $p_C>1$,
	Algorithm~\ref{alg:dac-hblr} consists of four stages:
	\begin{enumerate}
		\item Solution of $A_{ii} \tilde X_{ij} + \tilde X_{ij} B_{jj} = C_{ij}$, for $i,j = 1, 2$.  \\
		By the induction hypothesis, each solve is
		$\mathcal O(S_{ij} (n_{\min}  + k^2 \log^2 n))$, where $S_{ij}$ denotes the storage of $C_{ij}$ and, hence, the total cost is $\mathcal O(S (n_{\min}  + k^2 \log^2 n))$. 
		
		\item Computation of the right-hand-side in line~\ref{line:lrrhs2}. \\
		This computation involves $4k$ matrix-vector products with $\tilde X$.
		After $p-1$ recursive steps, the storage for $\tilde X$ is at most the one for $C$ plus the one for storing the $p-1$ low-rank updates, which amounts to $\mathcal O(S + kpn)$ according to Assumption 1. 
		Hence, by Lemma~\ref{lem:mvec-complexity}, the cost of this step is $\mathcal O(Sk + k^2 pn)$.  
		\item Solution of Sylvester equation in line~\ref{lrslv2}. \\
		Because the rank of the right-hand side is bounded by $4k$, the cost of this step 
		step is $\mathcal O(k^2 n \log^2 n)$.
		\item Update of $\tilde X$ in line~\ref{xass1}. \\
		The cost of this step is $\mathcal O(S k + k^2 pn)$, the storage of $\tilde X$ times the rank of $\delta X$.
	\end{enumerate}
	The total cost is dominated by the cost of Step 1, because
	one can easily prove by induction that $pn$ 
	is bounded by $\mathcal O(S)$. This completes the proof.
\end{proof}

Note that it is not the \hmatrix{} rank $k_C$ but the storage cost $S$ of the right-hand-side $C$ that appears explicitly in the complexity bound of Lemma~\ref{lem:lyapcost}. The advantage of using $S$ instead of an upper bound induced by $k_C$ is that it allows us to better explain why isolated relatively high ranks can still be treated efficiently. 

We remark that when $A$ and $B$ are banded, e.g. when they arise from the discretization of 1D differential operators,  Algorithm~\ref{alg:dac-hblr} can be executed without computing the HODLR representations of $A$ and $B$. Indeed, the low-rank factorizations of the off-diagonal blocks at line~\ref{line:lrrhs2} are easily retrieved on the fly and one can implement a solver of Sylvester equations that exploits the sparse structure of $A,B$, in \textsc{LowRankRhs\_Sylv}. 
\subsection{\HALR{} matrices in \hm}\label{sec:toolbox}

The \hm  \cite{massei2020hm} available at \url{https://github.com/numpi/hm-toolbox} is a MATLAB toolbox for working conveniently with HODLR and HSS matrices via the classes \lstinline|hodlr| and \lstinline|hss|, respectively. We have added functionality for \HALR{} matrices to the toolbox. A new class
\hmhmatrix\ has been introduced, which stores a
$(\mathcal T, k)$-\HALR{} matrix $A$ as an object with the following properties:

\begin{itemize}
	\item \lstinline|sz| is a $1 \times 2$ vector with the number of rows and columns 
	of the matrix $A$. 
	\item \lstinline|F| contains a dense representation of $A$ if it corresponds 
	to a leaf node labeled as \texttt{dense}. 
	\item \lstinline|U|, \lstinline|V| contain the low-rank factors of $A$ if it corresponds 
	to a leaf node labeled as \texttt{low-rank}. 
	\item \lstinline|admissible| is a Boolean flag that is set to 
	true for leaf nodes labeled as \texttt{low-rank}.
	\item \lstinline|A11|, \lstinline|A21|, \lstinline|A12|, \lstinline|A22| contain 
	$4$ \hmhmatrix\ objects corresponding to the children of $A$. 
\end{itemize}

The toolbox implements operations between \hmhmatrix\
objects, such as Algorithms~\ref{alg:mvec}--\ref{alg:sum} and matrix multiplication, as well as the Sylvester equation solver described in Section~\ref{sec:lyap}. Similar
to \lstinline|hodlr| and \lstinline|hss|, when arithmetic operations are performed recompression (e.g., low-rank approximation) is applied in order to limit storage while ensuring a relative accuracy. 
More specifically, an estimate of the norm of the result $C = A\ \mathrm{op}\ B$ is computed beforehand, and 
recompressions are performed using a tolerance $\epsilon \cdot \norm{C}$, 
where $\epsilon$ is a global tolerance. 
\section{Construction via adaptive detection of low-rank blocks}
\label{sec:constructors}
In this section we deal with task (i) described in the introduction, that is: given a function handle $f:\inter{1}{m}\times \inter{1}{n}\rightarrow \mathbb C$ construct an \HALR{} representation of $A=(a_{ij})_{i,j}\in\mathbb C^{m\times n}$ such that $a_{ij}=f(i,j)$. 
\subsection{Low-rank approximation}

We start by considering a simpler problem, the (global) approximation of $A$ with a low-rank matrix. Several methods have been proposed for this problem 
\cite{halko2011finding,bebendorf2000approximation,tyrtyshnikov2000incomplete,simon2000low}, which target different
scenarios. 
In the following sections we will often need to determine if a matrix is 
\emph{sufficiently low-rank} in the sense that it can be approximated, within a certain accuracy, with a matrix of rank bounded by $\maxrank$. For this purpose, we assume the availability of a procedure $(U, V, \mathrm{flag}) = \mathrm{LRA}(A, \maxrank, \epsilon)$
that returns a low-rank factorization $A \approx UV^T$, of rank at most $\maxrank$. The returned flag indicates whether the approximation verifies $\norm{A-UV^T}\lesssim \epsilon$.

In our implementation, we will rely on the \emph{adaptive cross approximation} (ACA) algorithm with partial pivoting \cite{bebendorf2000approximation}, which only requires the evaluation of a few matrix rows and columns selected by the algorithm. The parameter $\epsilon$ is used in the heuristic stopping criterion of the method, which in practice usually ensures the requirement on the absolute error stated above.
When aiming at a relative accuracy $\epsilon_{\mathsf{rel}}$, 
we need to set $\epsilon =  \epsilon_{\mathsf{rel}}\norm{A}$; if $\norm{A}$ is 
not available, it is estimated during the first ACA steps. 
The cost of ACA for returning an approximation of rank $k$ is $\mathcal O((k^2+kc_A)(m+n))$ where $c_A$ is the cost of evaluating one entry of $A$. The approximation is returned in factorized form as a product of  $m\times k$ and  $k\times n$ matrices and therefore the storage cost is $\mathcal O(k(m+n))$.

Depending on the features of $A$, other choices for the procedure \textsc{LRA} might be attractive. For instance, if the matrix-vector product by $A$ and $A^T$ can be performed 
efficiently (for instance when $A$ is sparse), then a basis for the
column range of $A$ can be well-approximated by taking matrix-vector
products with a small number of random vectors, and this can be
used to construct an approximate low-rank factorization
as described in~\cite{halko2011finding}. The methodology described in the following sections can be adapted to this context, by replacing the procedure \textsc{LRA}.
\subsection{\HALR{} approximation with prescribed cluster}
\label{sec:fixedtree}

Letting $\mathcal T$ denote a prescribed quad-tree cluster on $\inter{1}{m}\times\inter{1}{n}$, 
we consider the problem of approximating $A$ within a certain tolerance $\epsilon$, with a $(\mathcal T, k)$-\hmatrix{} $\widetilde A$ for some, hopefully small $k$. A straightforward strategy for building $\widetilde A$ is to perform the following operations on its blocks:
\begin{enumerate}[(i)]
	\item for a leaf node labeled \texttt{low-rank}, run LRA (without limitation on
	the rank)
	to  approximate the block in factored form;
	\item for a leaf node labeled \texttt{dense}, assemble and explicitly store the whole block;
	\item for a non-leaf node, proceed recursively with its children. 
\end{enumerate} 
To avoid an overestimation of the ranks for blocks of relatively small norm, 
we first approximate the norm of the entire matrix with the norm of a rough approximation of $A$ obtained by running \textsc{LRA} for a small value of $\maxrank$. 
\subsection{\HALR{} approximation with prescribed maximum rank}\label{sec:repartition}
We now discuss the problem at the heart of \HALR{}: 
Given an integer 
$\maxrank$ determine a quad-tree cluster $\mathcal T$ and an $(\mathcal T,\widetilde k)$-\HALR{} matrix $\widetilde A$ such that $\widetilde k\leq \maxrank$ and $\norm{A-\widetilde A}\leq\epsilon$. Moreover, we ideally want $\mathcal T$ to be minimal in the sense that if $\hat A$ is another $(\hat{\mathcal T}, \hat k)$-\hmatrix{} approximating $A$ within the tolerance $\epsilon$, and $\hat k\leq \maxrank$,
then $\hat{\mathcal T}\not<\mathcal T$. In this context, we consider all the trees 
(or subtrees) that contain only dense leaves to be equivalent to a single dense node. 

We propose to compute $\mathcal T$ and $\widetilde A$  with the following greedy algorithm:
\begin{enumerate}[(i)]
	\item We apply \textsc{LRA} limited by $\maxrank$ to the matrix $A$. If this is successful, as indicated by the returned flag, then $\mathcal T$ is set to a tree with a single node that is labeled \texttt{low-rank} and contains the approximation returned by \textsc{LRA}. 
	\item If \textsc{LRA} fails and the size of $A$ is smaller than a fixed parameter $n_{\min}$ then  $\mathcal T$ is set to a tree with a single node labeled as \texttt{dense} and the matrix is formed explicitly.  
	
	\item Otherwise we split $A$ in $4$ blocks of nearly equal sizes and we proceed recursively on each block. Then:
	\begin{itemize}
		\item If the $4$ blocks are all leaves labeled as \texttt{dense}, then we merge them into a single dense block. 
		\item Otherwise, we attach to the root of $\mathcal T$ the four subtrees resulting from the recursive calls.
	\end{itemize}
\end{enumerate} 
The whole procedure is summarized in Algorithm~\ref{alg:adaptive}. 

\begin{algorithm}
	\begin{algorithmic}[1]
		\Procedure{\HALR{}\_Adaptive}{$A, \maxrank, \epsilon$}			
		\State $(m,n) \gets \Call{size}{A}$
		\State $(U,V,\mathrm{flag}) \gets \Call{LRA}{A, \maxrank, \epsilon}$
		\If{LRA succeeds}
		\State $H.U \gets U$, $H.V \gets V$, 
		$H.\mathrm{admissible} = 1$
		\Else
		\If{$\min\{m,n\}\leq n_{\min}$}
		\State $H.F \gets A$, $H.\mathrm{admissible} = 0$ 
		\Else
		\State $H.\mathrm{admissible} = 0$, $m_1 \gets \lceil \frac{m}{2} \rceil, n_1 \gets \lceil \frac{n}{2} \rceil$
		\State $H.A_{11} = \Call{\HALR{}\_Adaptive}{A(1:m_1, 1:n_1), \maxrank, \epsilon}$
		\State $H.A_{21} = \Call{\HALR{}\_Adaptive}{A(m_1+1:m, 1:n_1), \maxrank, \epsilon}$
		\State $H.A_{12} = \Call{\HALR{}\_Adaptive}{A(1:m_1,n_1+1:n), \maxrank, \epsilon}$
		\State $H.A_{22} = \Call{\HALR{}\_Adaptive}{A(m_1+1:m, n_1+1:n), \maxrank, \epsilon}$
		\If{$A_{ij}$ are labeled as \texttt{dense} for $i,j = 1,2$}
		\State $H.F \gets \left[ \begin{array}{cc}
		H.A11.F & H.A12.F\\
		H.A21.F & H.A22.F \\
		\end{array} \right]$
		\State $H.A_{ij} \gets [\ ]$ \Comment{Remove the children}						
		\EndIf
		\EndIf
		\EndIf
		\EndProcedure
	\end{algorithmic}
	\caption{Approximation of a matrix $A$ using the greedy construction
		of the quad-tree cluster $\mathcal T$. The (absolute) approximation accuracy is determined by $\epsilon$.}
	\label{alg:adaptive}
\end{algorithm}

\subsection{Refining  an existing partitioning}

As operations are performed on a $(\mathcal T, k_A)$-\HALR{} matrix $A$, its low-rank properties may evolve and it can  be beneficial to readjust the tree $\mathcal T$ accordingly by making use of Algorithm~\ref{alg:adaptive}. More specifically, we refine $\mathcal T$
by performing the following steps from bottom to top:
\begin{enumerate}[(i)]
	\item Algorithm~\ref{alg:adaptive}
	with maximum rank $\maxrank$ is applied to each leaf node and the leaf node
	is replaced with the outcome. 
	\item A node with $4$ children that are \texttt{dense} leaf nodes  
	is merged into a single \texttt{dense} leaf node.
	\item For a node with $4$ children that are \texttt{low-rank} leaf nodes, we form the low-rank 
	matrix obtained by merging them. If its numerical rank is bounded 
	by $\maxrank$, we replace the node with a \texttt{low-rank} block. Otherwise, the node remains unchanged. 
\end{enumerate}

The procedure is summarized in Algorithm~\ref{alg:refine}; to decide
whether to merge four low-rank blocks in (iii), we make use of the method \textsc{CompressFactors} that computes a 
reduced truncated singular value decomposition of $UV^T$; this requires  $\mathcal O(k^2(m+n) + k^3)$ flops, where $k$ is the number of columns of $U,V$, see \cite[Algorithm~2.17, p. 33]{hackbusch}.

In the next sections, Algorithm~\ref{alg:refine} is regularly used to deal with situations where a matrix 
$B$ is obtained from operating with $\ell$ \hmatrix\ matrices $A_1, \ldots, A_\ell$ and its tree is initially set to 
the intersection of the cluster trees of $A_1, \ldots, A_\ell$.
A relevant special case
is the one where only $A_1$ is a general \hmatrix\ matrix and all the other matrices are low-rank; in this case the initial tree for $B$ is the one of $A_1$. 

\begin{algorithm}
	\begin{algorithmic}[1]
		\Procedure{RefineCluster}{$A, \maxrank, \epsilon$}
		\If{A is leaf node}
		\State $A \gets \Call{\HALR{}\_Adaptive}{A, \maxrank, \epsilon}$
		\Else
		\State $A.A_{ij} \gets \Call{RefineCluster}{A.A_{ij}, \maxrank, \epsilon}$ for $i,j = 1, 2$.
		\If{$A.A_{ij}$ are \texttt{dense} leaf nodes for $i,j = 1,2$}
		\State $A.F \gets \begin{bmatrix}
		A.A_{11}.F & A.A_{12}.F \\
		A.A_{21}.F & A.A_{22}.F \\
		\end{bmatrix}$
		\EndIf
		\If{$A.A_{ij}$ are \texttt{low-rank} leaf nodes for $i,j = 1,2$}
		\State $U \gets \begin{bmatrix}
		A.A_{11}.U & A.A_{12}.U && \\
		&& A.A_{21}.U & A.A_{22}.U
		\end{bmatrix}$
		\State $V \gets \begin{bmatrix}
		A.A_{11}.V && A.A_{21.V} \\
		& A.A_{12}.V && A.A_{22}.V \\
		\end{bmatrix}$
		\State $(U, V) \gets \Call{CompressFactors}{U, V, \epsilon}$ \Comment{\cite[Algorithm~2.17, p. 33]{hackbusch}}
		\If{$\mathrm{rank}(UV^T) \leq \maxrank$}
		\State $(A.U, A.V) \gets (U, V)$
		\State $A.\mathrm{admissible} \gets 1$
		\State $A.A_{ij} \gets [\ ]$ for $i,j =1,2$
		\EndIf
		\EndIf
		\EndIf
		\EndProcedure
	\end{algorithmic}
	\caption{}
	\label{alg:refine}
\end{algorithm}

\section{Numerical examples}
\label{sec:experiments}
In Sections~\ref{sec:hblr} and~\ref{sec:constructors} we have developed all the tools needed to implement an efficient implicit time integration  scheme for the reaction diffusion equation  \eqref{eq:reaction-diffusion}, provided that the discretization of the operator $L$ has the Kronecker sum structure $I \otimes A_n+B_n\otimes I$. In the following, we describe in detail how to put all pieces together for the representative case  of the Burgers' equation. Then we provide numerical tests for other problems that can be treated similarly.  
The experiments have been run on a server with two
Intel(R) Xeon(R) E5-2650v4 CPU with 12 cores and 24 threads each, 
running at 2.20 GHz,
using MATLAB R2017a with the Intel(R) Math Kernel Library Version 11.3.1. In all case studies, 
the relative truncation threshold has been set to $\epsilon_{\mathsf{rel}} = 10^{-8}$.

\subsection{Burgers' equation}
We consider the following  Burgers' equation \cite[Example 3]{burger} with Dirichlet boundary conditions: 
\[
\begin{cases}\displaystyle\frac{\partial u}{\partial t}= K\left(\frac{\partial^2 u}{\partial x^2} + \frac{\partial^2 u}{\partial y^2}\right) - u\cdot \left(\frac{\partial u}{\partial x}+\frac{\partial u}{\partial y}\right)=0&(x,y)\in\Omega=(0,2)\times(0,2),\\
\displaystyle u(x, y, t)= \frac{1}{1 + \text{exp}((x + y - t)/2K)}&\text{for } t=0\text{ or } (x,y)\in\partial \Omega,
\end{cases}
\]
for $K=0.001$.
We make use of a uniform finite differences discretization in space, with step $h=\frac{2}{n-1}$, combined with a Euler IMEX method for the discretization in time with step $\Delta t= 5\cdot 10^{-4}$. This yields
\begin{equation}\label{burgers-lyap}
\left(\frac 12I-\Delta tA_n\right)U_{n,\ell+1}+U_{n,\ell+1}\left(\frac 12I-\Delta tA_n\right)=U_{n,\ell}+\Delta t(F_{n,\ell}+B_{n,\ell}) ,
\end{equation}
where, denoting with $\circ$ the Hadamard (component-wise) product, we have set
\begin{align*}
F_{n,\ell}&:=U_{n,\ell}\circ \left[D_{n,\ell}U_{n,\ell}+U_{n,\ell}D_{n,\ell}^T+ (e_nv_{n,\ell}^T+  v_{n,\ell}e_n^T)/h\right], & (v_{n,\ell})_j=u(jh, 2, \ell\Delta t),\\
B_{n,\ell}&:=(e_1w_{n,\ell+1}^T + w_{n,\ell+1}e_1^T+e_nv_{n,\ell+1}^T+  v_{n,\ell+1}e_n^T)/h^2,&(w_{n,\ell})_j=u(jh, 0, \ell\Delta t)
\end{align*}
and
\[A_n=\frac{1}{h^2}\begin{bmatrix}-2&1\\ 1& -2 &\ddots\\ &\ddots&\ddots&1\\  &&1&-2\end{bmatrix}, \qquad D_{n,\ell}=\frac{1}{h}\begin{bmatrix}
-1&1\\
&\ddots&\ddots\\
&&\ddots&1\\
&&&-1
\end{bmatrix}.\]
Note that $\mathrm{rank}(B_{n,\ell})\leq 4$.
The time stepping procedure is reported in Algorithm~\ref{alg:burgers}.

\begin{algorithm}
	\begin{algorithmic}[1]
		\Procedure{Burgers\_IMEX}{$n$, $\Delta t$, $T_{\max}$}
		\State $h\gets \frac{2}{n-1}$
		\State $\left(U_{n, 0}\right)_{ij}\gets u(ih,jh,0)$\label{step:init}
		\State $t\gets 0$, $\ell\gets 0$
		\While{$t\le T_{\max}$}
		\State $F_{n,\ell}\gets U_{n,\ell}\circ \left[D_{n,\ell}U_{n,\ell}+U_{n,\ell}D_{n,\ell}^T+ (e_nv_{n,\ell}^T+  v_{n,\ell}e_n^T)/h\right]$\label{step:Fnl}
		\State $B_{n,\ell}\gets(e_1w_{n,\ell+1}^T + w_{n,\ell+1}e_1^T+e_nv_{n,\ell+1}^T+  v_{n,\ell+1}e_n^T)/h^2$\label{step:Bnl}
		\State $R\gets U_{n,\ell}+\Delta t(F_{n,\ell}+B_{n,\ell})$\label{step:R}
		\State $U_{n,\ell+1}\gets\Call{Lyap}{\frac{1}{2} I-\Delta tA_n,R}$\Comment{Solve the Lyapunov equation \eqref{burgers-lyap}}\label{step:lyap}
		\State $t\gets t+\Delta t$, \ \ $\ell\gets\ell+1$
		\EndWhile
		\EndProcedure
	\end{algorithmic}
	\caption{Time stepping procedure for the Burgers' equation}\label{alg:burgers}
\end{algorithm}
If Algorithm~\ref{alg:burgers} is executed with standard dense numerical linear algebra each time step requires $\mathcal O(n^3)$ flops and $\mathcal O(n^2)$ storage. In order to exploit the additional structure observed in Figure~\ref{fig:burgers_sol} we propose to maintain the \HALR{} representations of the matrices $F_{n,\ell}$ and $U_{n,\ell}$. In particular:
\begin{enumerate}[(i)]
	\item At line~\ref{step:init} we employ Algorithm~\ref{alg:adaptive} to retrieve a quad-tree cluster tree $\mathcal T$ and a $(\mathcal T,\widetilde k)$-\hmatrix{} representation of $U_{n,0}$. The rank $\widetilde k$ satisfies $\widetilde k\leq \maxrank$.
	\item In place of lines~\ref{step:Fnl}--\ref{step:R} we compute a $(\mathcal T, k_R)$-\hmatrix{} representation for $R$ using the algorithm described in Section~\ref{sec:fixedtree}. More specifically, we force the quad-tree cluster to be the one of $U_{n,\ell}$. We remark that an efficient handle function for evaluating the entries of $R$ is obtained by leveraging the \hmatrix{} structure of $F_{n,\ell},U_{n,\ell}$ and the low-rank structure of $B_{n,\ell}$. 
	\item We refine the cluster tree of $R$ using Algorithm~\ref{alg:refine}. During this process, the truncation is performed according to a relative threshold 
	$\epsilon_{\mathsf{rel}} = 10^{-5}$, comparable with the accuracy of the time integration method. This avoids taking into account the increase of the rank caused by the accumulation of the errors. 
	\item Since the Lyapunov equation \eqref{burgers-lyap} has HODLR coefficients and \hmatrix{} right-hand-side we employ Algorithm~\ref{alg:dac-hblr} for its solution at line~\ref{step:lyap}. Consequently, the matrix $U_{n,\ell+1}$ inherits the same quad-tree cluster of $R$.
\end{enumerate}  
Note that the refinement of the cluster at step (iii) is the only operation that can modify the quad-tree cluster used to represent the solution. 
The test has been repeated for 
$\maxrank = 25, 50, 75, 100$. 
In Table~\ref{tab:burgers-times} we report the total computational time (labeled as $T_{\mathrm{tot}}$), and the maximum 
memory consumption for storing the solution in each run, measured in MB. We also report
the total time spent solving Lyapunov equations (phase $(iv)$, labeled as $T_{\mathrm{lyap}}$)
and approximating the right-hand-side and adapting the \hmatrix\ structure (phases $(ii)$--$(iii)$, labeled as $T_{\mathrm{adapt}})$. These two phases accounts for most of
the computational cost (between $85\%$ and $90\%$); the solution of the Lyapunov equation is 
the most expensive operation. The ratio $T_{\mathrm{lyap}} / T_{\mathrm{adapt}}$ seems to grow with $n$, and is around $2$ at $n = 16384$. 

Figure~\ref{fig:burgers_time} describes in detail the case $\maxrank = 50$. 
The solution at time $t = 0$ has a low-rank structure; the region
where the shock happens is confined to the origin in 
$[0, 2]^2$. After some iterations, the shock moves causing an
increase in the rank required to approximate the solution, and 
the method switches to the \hmatrix{} structure. When the time
approaches $t = 3.75$, the solution becomes numerically low-rank again,
because the shock moves close to top-right corner of $[0, 2]^2$. 
This progression
is reported in the top part
of Figure~\ref{fig:burgers_time}, which shows the time required
for each iteration, and the structure adopted by the method. 
We remark that since the 1D Laplacian can be diagonalized via the
sine transform, Algorithm~\ref{alg:burgers} can be efficiently
implemented also without exploiting the local low-rank structure. 
In particular, the iteration cost becomes $\mathcal O(n^2\log(n))$.
In the left part of Table~\ref{tab:dense-times} we have reported 
the times required by a dense version of 
Algorithm~\ref{alg:burgers} for integrating 
\eqref{eq:burgers} where any hierarchical structure is ignored, 
and the Lyapunov equations are solved by diagonalizing the 
Laplace operator using the FFT; for this case, we have also 
reported the average time for solving the Lyapunov
equation via fast diagonalization; we see that leveraging the
\HALR{} structure makes the algorithm faster since dimension 8192. 

The bottom plot of Figure~\ref{fig:burgers_time} shows the 
absolute approximation
error in the discrete $l^2$-norm, computed comparing the numerical
solution with the true solution $u(x,y,t)= \left[1+\exp\left(\frac{x+y-t}{2K}\right)\right]^{-1}$. The error curve associated with the implementation of Algorithm~\ref{alg:burgers} in dense arithmetic matches the one reported in Figure~\ref{fig:burgers_time} confirming that the low-rank truncations have negligible effects on the computed solution.   
We remark that the displayed errors come from the discretization, 
and are not introduced by the low-rank approximations in the blocks: 
we have 
verified the computations using dense unstructured matrices, 
obtaining the same results.

\begin{figure}
	\begin{center}
		\includegraphics{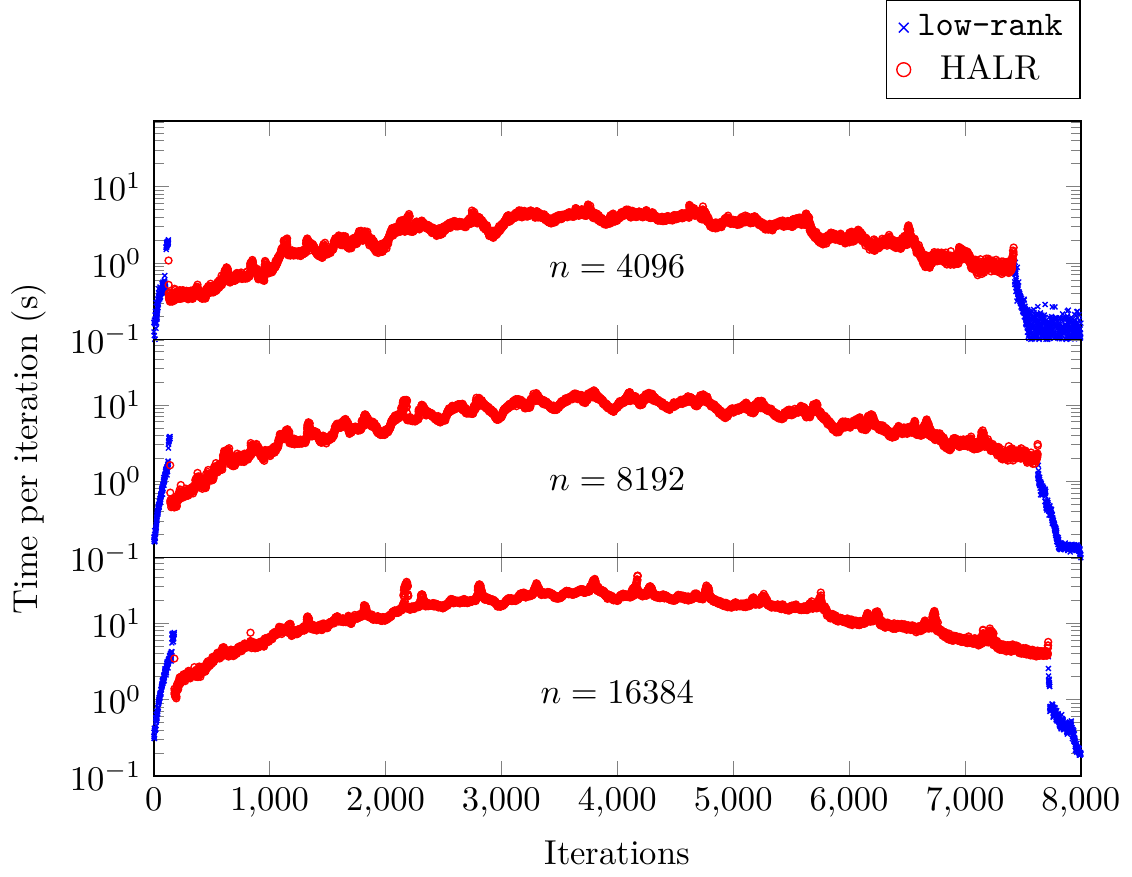} \\[8pt]
		\includegraphics{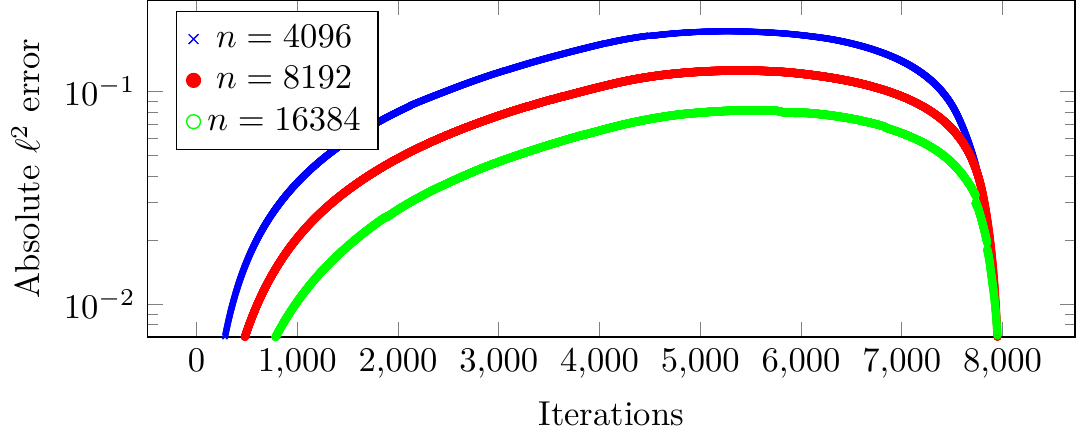}
	\end{center}
	
	\caption{Top figure: time required for solving the 
		Lyapunov equation and the adaptive approximation at each iteration; 
		the iterations marked as \texttt{low-rank} correspond to the case 
		where the matrix has the trivial partitioning with only 
		one block labeled as \texttt{low-rank}; 
		bottom figure: approximation error during the 
		iteration, obtained computing the $l^2$-norm of the difference
		with the exact solution. The reported timings are for $\maxrank=50$, and $n = 4096$, $n = 8192$, and $n=16384$.
		The reported errors are absolute; 
		for comparison, note that the $\ell^2$ norm of the solution grows monotonically from $0$ to $1$ 
		in the time interval $[0, 4]$, as the solution converges pointwise 
		to $1$. 
	}
	
	\label{fig:burgers_time}
\end{figure}

\begin{table}
	\centering
	\fbox{\parbox{\linewidth}{ 
			\begin{tabular}{c|cccc|cccc}
				& \multicolumn{4}{c|}{$\maxrank = 25$} & \multicolumn{4}{c}{$\maxrank = 50$} \\ 
				$n$ & $T_{\mathrm{tot}}$ (s) & $T_{\mathrm{lyap}}$ (s) & $T_{\mathrm{adapt}}$ (s) & Mem. & $T_{\mathrm{tot}}$ (s) & $T_{\mathrm{lyap}}$ (s) & $T_{\mathrm{adapt}}$ (s) & Mem. \\ \hline
				$4096$ & $\mathbf{20057.6}$ & $9742.6$ & $7256.7$ & $13.1$& $22334.0$ & $10604.3$ & $7767.4$ & $\mathbf{10.3}$\\ 
				$8192$ & $\mathbf{54659}$ & $29231.1$ & $17104.5$ & $17.8$& $57096.9$ & $32116.9$ & $17346.2$ & $\mathbf{16.4}$\\ 
				$16384$ & $132238.3$ & $80762.6$ & $36539.2$ & $\mathbf{25.3}$& $\mathbf{119130.4}$ & $76431.5$ & $31011.5$ & $35.3$\\
			\end{tabular}
			\\[8pt]
			\begin{tabular}{c|cccc|cccc}
				& \multicolumn{4}{c|}{$\maxrank = 75$} & \multicolumn{4}{c}{$\maxrank = 100$} \\ 
				$n$ & $T_{\mathrm{tot}}$ (s) & $T_{\mathrm{lyap}}$ (s) & $T_{\mathrm{adapt}}$ (s) & Mem. & $T_{\mathrm{tot}}$ (s) & $T_{\mathrm{lyap}}$ (s) & $T_{\mathrm{adapt}}$ (s) & Mem. \\ \hline
				$4096$ & $26727.0$ & $12915.1$ & $8923.3$ & $10.8$& $29383.2$ & $14174.7$ & $10362.5$ & $12.1$\\ 
				$8192$ & $59340.9$ & $33756.1$ & $18825.8$ & $22.4$& $63150.1$ & $34108.4$ & $22163.0$ & $24.8$\\ 
				$16384$ & $119602.0$ & $71187.1$ & $35398.9$ & $46.5$& $125688.6$ & $71050.3$ & $40701.3$ & $50.4$\\
	\end{tabular}}}
	\\[8pt]
	\caption{Time and storage required for integrating the Burgers' equation from $t = 0$ to $t = 4$, 
		for different values of $n$ and of $\maxrank$. The best times and memory usage for a given $n$ are 
		reported in bold. The reported memory is measured in Megabytes (MB), and is the maximum
		memory consumption for storing the solution during the iterations. The reported timings
		are the cumulative ones for $8000$ time steps.}
	\label{tab:burgers-times}
\end{table}

\begin{table}
	\centering
	FFT-based algorithms \\[.1cm]
	\fbox{		
		\begin{tabular}{c|cc|cc}
			& \multicolumn{2}{c|}{Burgers} & \multicolumn{2}{c}{Allen-Cahn} \\ 
			$n$ & $T_{\mathrm{tot}}$ (s) & Avg. $T_{\mathrm{lyap}}$ (s)  & $T_{\mathrm{tot}}$ (s) & Avg. $T_{\mathrm{lyap}}$ (s)  \\ \hline
			$4096$ &  $18094$ & $2.26$& $174.97$ & $0.44$ \\ 
			$8192$ & $70541$ & $8.82$&$847.3$& $2.12$   \\ 
			$16384$ & $295507$ & $36.94$ & $2967$&$7.42$ \\
	\end{tabular}}\\[8pt]
	\caption{Time required for integrating the Burgers' equation and the Allen-Cahn equation, 
		for different values of $n$, relying on sine and cosine transforms. The
		time step is chosen as in the experiments using the \hmatrix\ structure. }
	\label{tab:dense-times}
\end{table}

\subsection{Allen-Cahn equation}

The Allen-Cahn equation is a reaction-diffusion equation which describes a phase separation process. It takes the following form:
\begin{equation}\label{eq:allen-cahn}
\begin{cases}
\displaystyle \frac{\partial u}{\partial t}+\nu\left(-\Delta \right)u= g(u)& \text{in } \Omega,\\[8pt]
\displaystyle \frac{\partial u}{\partial \vec n}=0& \text{on } \partial\Omega,\\[8pt]
u(x,y,0) = u_0(x,y),
\end{cases}
\end{equation}
where $\nu=5\cdot 10^{-5}$ is the mobility, $\Omega = [0, 1]^2$
and the source term is the cubic function $g(u):= u(u - 0.5)(1 - u)$.
This test problem is described in \cite{burrage2012efficient}.
For a fixed choice of $(x,y)$, the solution $u(x,y,t)$ converges
either to $1$ or $0$ for
most points inside the domain as $t \to \infty$. 

We discretize the problem with the IMEX implicit Euler method
in time and centered finite differences in space, 
exactly as in the Burger's equation example. The only difference
is that in this problem we are considering Neumann boundary 
conditions instead of Dirichlet. 

In this example 
we choose the initial (discrete) solution randomly, 
distributed as $u(x_i, y_j, 0) \sim N(\frac 12, 1)$, with
every grid point independent of the others. Integrating the system yields 
a model for spinodal decompositions \cite{burrage2012efficient}. 
We remark that with this choice the matrix $U_{n,0}$ has 
no low-rank structure, and will be treated as a dense matrix. 
On the other hand, during the time evolution, the smoothing effect of the 
Laplacian makes the solution $U_{n,\ell}$ well-approximable by 
low-rank matrices, at least locally (see Figure~\ref{fig:allencahn-solution}). 
For even larger $\ell$, the solution converges
to either $0$ or $1$, giving rise to several ``flat regions'', 
which can be approximated by low-rank blocks, and the structure 
$U_{n,\ell}$ can be efficiently memorized using 
a $(\mathcal T, k)$-\hmatrix{} representation. 

We have integrated the solution for $t \in [0, 40]$, using 
$\Delta t = 0.1$, and grid sizes from  
$1024$ up to $16384$. The simulation has been run for 
$\maxrank = 25, 50, 75, 100$. The time and storage used for the 
integration has been reported in Table~\ref{tab:allencahn-times}, analogously
to the Burgers' equation case. Note that here the maximum memory consumption is always
attained at $t = 0$, where the solution is stored as a dense matrix. 

Figure~\ref{fig:allencahn-solution} and \ref{fig:allencahn-time} 
focus on the case $n = 4096$ and $\maxrank = 50$. 
The evolution in time of the solution and of the corresponding \hmatrix{} structure
are reported in Figure~\ref{fig:allencahn-solution}. 
\begin{figure}[h!]
	\begin{center}
		\begin{minipage}{.2\linewidth}
			\centering  $t = 0.3$ \\[8pt]
			\includegraphics[width=\linewidth]{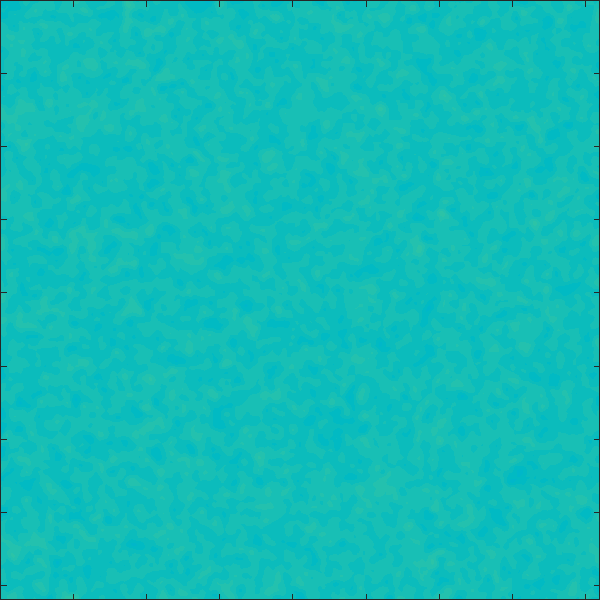}
		\end{minipage}~~~\begin{minipage}{.2\linewidth}
			\centering  $t = 0.5$ \\[8pt]
			\includegraphics[width=\linewidth]{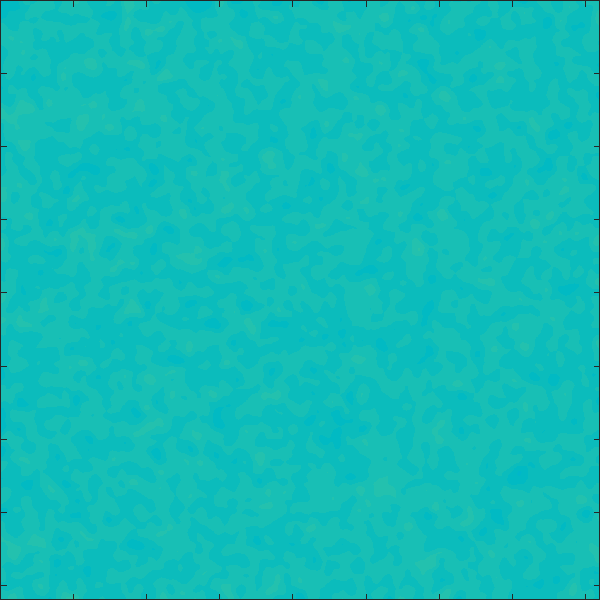}
		\end{minipage}~~~\begin{minipage}{.2\linewidth}
			\centering  $t = 18.0$ \\[8pt]
			\includegraphics[width=\linewidth]{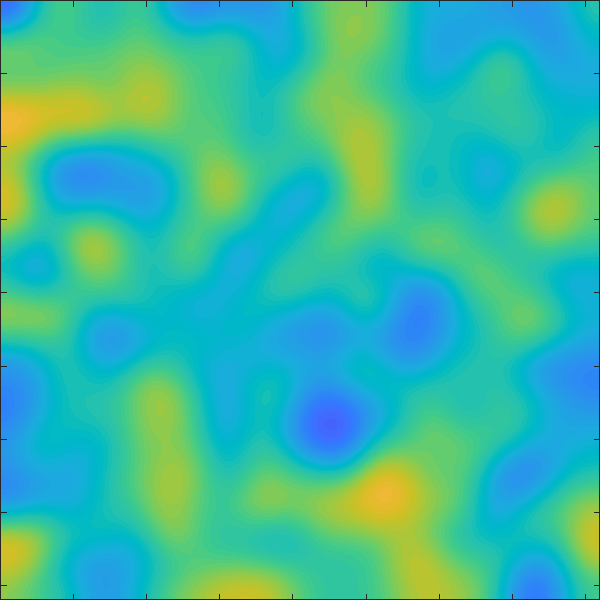}
		\end{minipage}~~~\begin{minipage}{.2\linewidth}
			\centering  $t = 35.0$ \\[8pt]
			\includegraphics[width=\linewidth]{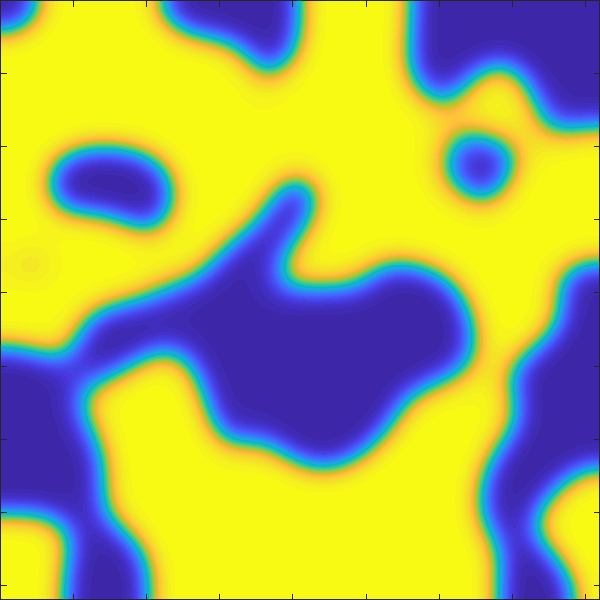}
		\end{minipage}
	\end{center}
	\bigskip 	
	\begin{center}
		\includegraphics[width=.2\linewidth]{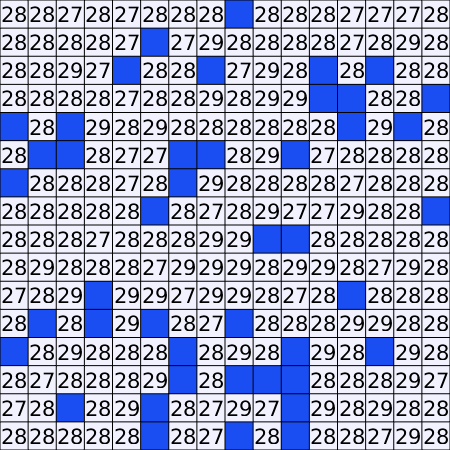}~~~\includegraphics[width=.2\linewidth]{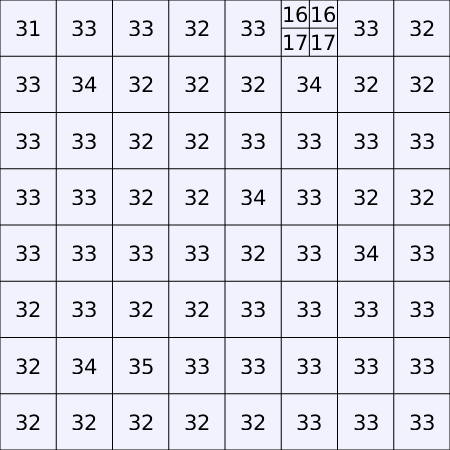}~~~\includegraphics[width=.2\linewidth]{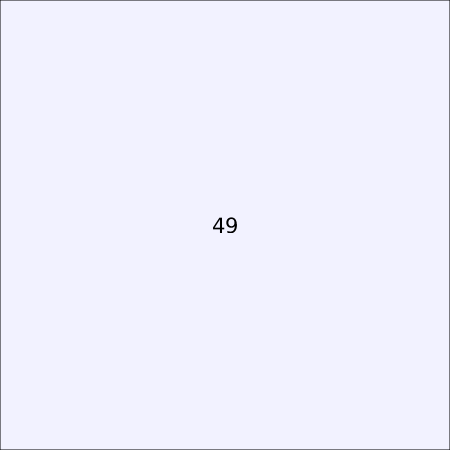}~~~\includegraphics[width=.2\linewidth]{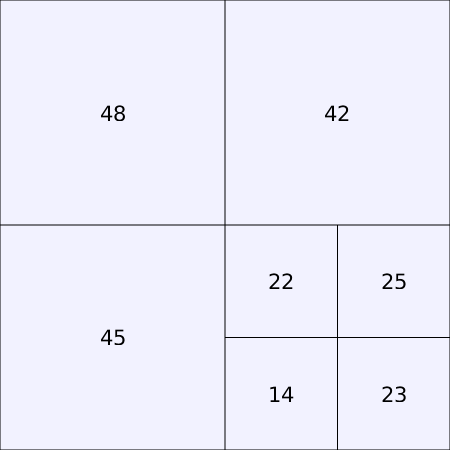}
	\end{center}
	\centering 
	\caption{Evolution of the structure and the solution at
		different time steps.}
	\label{fig:allencahn-solution}
\end{figure}
The initial structure is 
a tree with a single node labeled as \texttt{dense}, and the
\hmatrix{} representation can be used already at time $0.3$. Then,
the solution becomes (numerically) low-rank at time $6.7$ (with
rank approximately $50$);  
the third image in Figure~\ref{fig:allencahn-solution} shows 
the low-rank structure at time $t = 18$. Later, as the phase separation happens, the representation becomes again \hmatrix{}, and stabilizes
at the format shown in the fourth figure. 
The time required for each iteration, and
the structure adopted is reported in Figure~\ref{fig:allencahn-time}. Analogously to the Burgers' example, the 1D Laplacian with Neumann boundary conditions  can diagonalized via the cosine transform providing a dense method with iteration cost $\mathcal O(n^2\log(n))$. In the right part of Table~\ref{tab:dense-times}, we have reported the times required by the dense method for integrating \eqref{eq:allen-cahn} and the average time for solving the Lyapunov equation via fast diagonalization; we see that exploiting the structure makes the \HALR{} approach faster from dimension 16384.

\begin{figure}[h!]
	\begin{center}
		\includegraphics{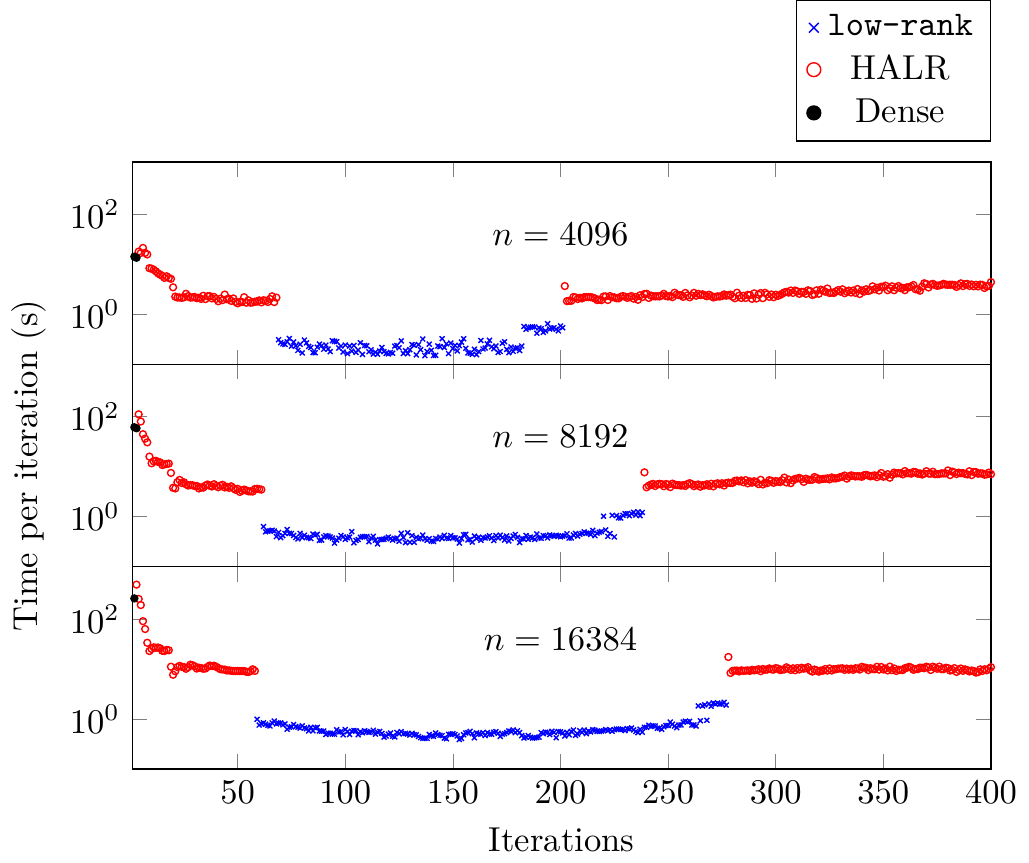}
	\end{center}
	
	\caption{Execution time per iteration for the Allen-Cahn problem. 
		The reported timings are for $\maxrank = 50$, and $n = 4096$, $n = 8192$, and $n=16384$.}
	\label{fig:allencahn-time}
\end{figure}

\begin{table}
	\centering
	\fbox{\parbox{\linewidth}{ 
			\begin{tabular}{c|cccc|cccc}
				& \multicolumn{4}{c|}{$\maxrank = 25$} & \multicolumn{4}{c}{$\maxrank = 50$} \\ 
				$n$ & $T_{\mathrm{tot}}$ (s) & $T_{\mathrm{lyap}}$ (s) & $T_{\mathrm{adapt}}$ (s) & Mem. & $T_{\mathrm{tot}}$ (s) & $T_{\mathrm{lyap}}$ (s) & $T_{\mathrm{adapt}}$ (s) & Mem. \\ \hline
				$1024$ & $277.1$ & $153.0$ & $124.1$ & $8.0$& $317.6$ & $197.1$ & $120.5$ & $8.0$\\ 
				$2048$ & $1080.6$ & $733.7$ & $346.9$ & $32.0$& $713.9$ & $561.0$ & $152.9$ & $32.0$\\ 
				$4096$ & $1754.2$ & $1479.8$ & $274.4$ & $128.0$& $900.7$ & $701.5$ & $199.2$ & $128.0$\\ 
				$8192$ & $3702.4$ & $3202.8$ & $499.5$ & $512.0$& $1823.7$ & $1428.9$ & $394.7$ & $512.0$\\ 
				$16384$ & $7704.5$ & $6392.1$ & $1312.4$ & $2048.0$& $3688.1$ & $2765.7$ & $922.5$ & $2048.0$\\ 
			\end{tabular} \\[8pt]
			\begin{tabular}{c|cccc|cccc}
				& \multicolumn{4}{c|}{$\maxrank = 75$} & \multicolumn{4}{c}{$\maxrank = 100$} \\ 
				$n$ & $T_{\mathrm{tot}}$ (s) & $T_{\mathrm{lyap}}$ (s) & $T_{\mathrm{adapt}}$ (s) & Mem. & $T_{\mathrm{tot}}$ (s) & $T_{\mathrm{lyap}}$ (s) & $T_{\mathrm{adapt}}$ (s) & Mem. \\ \hline
				$1024$ & $229.6$ & $151.1$ & $78.4$ & $8.0$& $\mathbf{187.9}$ & $118.9$ & $69.0$ & $8.0$\\ 
				$2048$ & $430.9$ & $338.2$ & $92.7$ & $32.0$& $\mathbf{346.9}$ & $252.0$ & $94.9$ & $32.0$\\ 
				$4096$ & $619.7$ & $444.2$ & $175.4$ & $128.0$& $\mathbf{505.2}$ & $325.6$ & $179.6$ & $128.0$\\ 
				$8192$ & $1424.5$ & $1036.0$ & $388.4$ & $512.0$& $\mathbf{1147.4}$ & $731.0$ & $416.3$ & $512.0$\\ 
				$16384$ & $2899.1$ & $1982.0$ & $917.1$ & $2048.0$& $\mathbf{2336.8}$ & $1331.8$ & $1005.0$ & $2048.0$\\ 	
	\end{tabular}}} \\[8pt]
	\caption{Time and storage required for integrating the Allen-Cahn equation from $0$ to $40$, 
		for different values of $n$ and of $\maxrank$. The best times for a given $N$ are 
		reported in bold. The reported memory is measured in Megabytes (MB), and is the maximum
		memory consumption for storing the solution during the iterations.}
	\label{tab:allencahn-times}
\end{table}

\subsection{Inhomogeneous Helmholtz equation}
Let us  consider the following Helmholtz equation with Neumann boundary conditions on the square $\Omega:=[-1,1]^2$:
\begin{equation}\label{eq:helm}
\begin{cases}
\Delta u + k u + f = 0\\
\displaystyle \frac{\partial u}{\partial \vec n} = 0 & \text{on } \partial \Omega
\end{cases},\qquad \left\{\begin{array}{l}
k(x,y):= 2500\cdot  e^{-50\left|x^2+(y+1)^2-\frac 14\right|}\\
f(x,y):=  \frac{e^{-x^2-y^2}}{100}
\end{array}\right..
\end{equation}
The chosen  coefficient $k(x,y)$ is negligible outside of a semi annular region centered in $(0, -1)$; the source term $f$ is concentrated around the origin. The numerical solution of \eqref{eq:helm}, reported in Figure~\ref{fig:gmres-struct}, is well approximated in the \hmatrix\ format which refines the block low rank structure in the region where $k$ takes the larger values.
\begin{figure}[h!]
	\centering 
	\includegraphics[width=.25\textwidth]{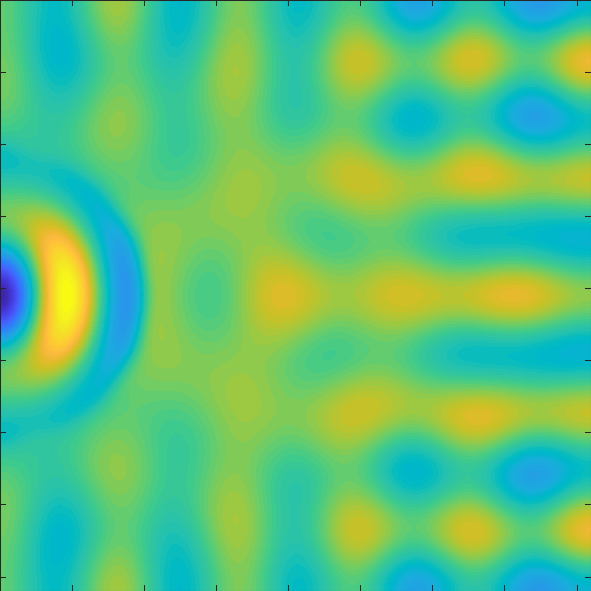}~~~~~~ \includegraphics[width=.25\textwidth]{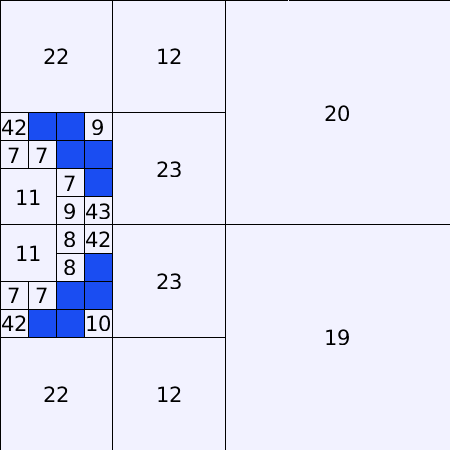}
	\caption{Solution of the Helmholtz equation \eqref{eq:helm} discretized on a $4096 \times 4096$ grid (left) and its representation in the \hmatrix\ format (right), with $\maxrank=50$.}\label{fig:gmres-struct}
\end{figure}

The usual finite difference discretization of \eqref{eq:helm} provides the $n^2\times n^2$ linear system 
\begin{equation}\label{eq:lin-sys}
\left(A\otimes I+I\otimes A+D_k\right)\vect(X) + \vect(F)=0,
\end{equation}
where $A$ is the 1D Laplacian matrix with Neumann boundary conditions, $D_k$ is the diagonal matrix containing the evaluations of $k(x,y)$ at the grid points and $F$ contains the analogous evaluations of the source term. Note that, omitting the matrix $D_k$, \eqref{eq:lin-sys} can be solved as a Lyapunov equation. In the spirit of numerical methods for generalized matrix equations \cite{benner2013low}, we propose to solve \eqref{eq:lin-sys} with a structured GMRES iteration using the Lyapunov solver as preconditioner; more specifically we store all the (matricized) vectors generated by the GMRES  in the \HALR{} format.  If necessary (when the rank grows) we adjust the partitioning of the latter via Algorithm~\ref{alg:refine}. The inner product between vectors are computed using the block recursive procedure described in Algorithm~\ref{alg:dot}, which returns the trace of $A^TB$ for two \HALR{} matrices $A$ and $B$. Finally, the solution is constructed as a linear combination of \hmatrix\ matrices. The whole procedure is reported in Algorithm~\ref{alg:pgmres}.

\begin{algorithm}
	\begin{algorithmic}[1]
		\Procedure{PGMRES}{$A$, $K$, $F$, $\mathsf{tol}$, $\maxrank$}\Comment{$\diag(\vect(K))=D_k$}
		\State $B\gets\Call{Lyap}{A, F}$
		\State $U_1\gets B/\norm{B}_F$
		\For{$j=1,2,\dots$}
		\State $R=AU_{j}+U_{j}A^T+ K\circ U_{j}$
		\State $R\gets\Call{Repartition}{R,\maxrank}$
		\State $W\gets\Call{Lyap}{A,R}$
		\For{$s=1,\dots,j$}
		\State $H_{s,j}\gets \Call{Dot}{W, U_s}$\Comment{Trace of $W^TU_s$, see Algorithm~\ref{alg:dot}}
		\State $W\gets W- H_{s, j}\cdot U_s$
		\EndFor
		\State $H_{j+1, j}\gets \norm{W}_F$, $U_{j+1}\gets W/\norm{W}_F$
		\State $y\gets \norm{B}_F H^\dagger e_1$
		\If{$\norm{Hy-\norm{B}_F e_1}<\mathsf{tol}\cdot \norm{B}_F$}
		\State \textbf{break}
		\EndIf
		\EndFor
		\State	\Return $\sum_j y_{j}U_j$
		\EndProcedure
	\end{algorithmic}
	\caption{Structured and preconditioned GMRES iteration for \eqref{eq:helm} }\label{alg:pgmres}
\end{algorithm}
\begin{algorithm}
	\begin{algorithmic}[1]
		\Procedure{Dot}{$A$, $B$}
		\If{$A$ and $B$ are leaf nodes or at least one is \texttt{low-rank}}
		\State \Return $\mathrm{Trace}(A^TB)$\Comment{Exploiting the low-rank structure of $A$ or $B$, if any}
		\Else
		\If{$A$ is \texttt{dense} or $B$ is \texttt{dense}}
		\State Set the partitioning of $A$ and $B$ equal to the finest of the two. 
		\EndIf
		\State\Return \Call{Dot}{$A_{11}, B_{11}$} + \Call{Dot}{$A_{12}, B_{12}$}+\Call{Dot}{$A_{21}, B_{21}$}+\Call{Dot}{$A_{22}, B_{22}$}
		\EndIf
		\EndProcedure
	\end{algorithmic}
	\caption{Trace inner product for two \hmatrix\ matrices }\label{alg:dot}
\end{algorithm}

Equation \eqref{eq:helm} has been solved for different grid sizes with  $\maxrank=50$. The time and memory consumption are reported in Table~\ref{tab:pgmres}. The storage needed for the solution scales linearly with $n$.  In addition, the table contains the number of iterations needed by the preconditioned GMRES to reach the relative tolerance $\mathsf{tol}=10^{-4}$. We note that the number of iterations grows very slowly as the grid size increases. The time required depends on many factors, such as  the distribution of the ranks and the complexity of the structure in the basis generated by the GMRES; we just remark that it grows subquadratically for this example. In future work we plan to explore the use of restarting mechanisms and other truncation strategies in order to optimize the approach.

\begin{table}
	\centering
	\fbox{
		\begin{tabular}{c|ccccc}
			& \multicolumn{5}{c}{$\maxrank = 50$}\\ 
			$n$ & $T_{\mathrm{tot}}$ (s) & $T_{\mathrm{lyap}}$ (s) & $T_{\mathrm{adapt}}$ (s) & It.& Mem. \\ \hline
			$1024$ & $72.83$ & $42.82$ & $11.39$ & $25$&$1.72$\\ 
			$2048$ & $231.92$ & $161.35$ & $24.25$ &$26$& $3.73$\\ 
			$4096$ & $603.03$ & $362.22$ & $75.74$ &$26$& $8.18$\\ 
			$8192$ & $1773.6$ & $982.37$ & $244.54$ &$26$& $16.68$\\ 
			$16384$ & $5884$ & $3065$ & $1044.4$ & $28$&$33.3$\\ 
	\end{tabular}}\\[8pt]
	\caption{Time and storage required for solving the inhomogeneous  Helmholtz equation \eqref{eq:helm}, 
		for different values of $n$ and $\maxrank=50$.  The reported memory is measured in Megabytes (MB), and refers to the storage of the solution.}\label{tab:pgmres}
\end{table}

\section{Conclusions}\label{sec:conclusions}

In  this work, we have proposed a new format for storing matrices arising from 2D discretization
of functions which are smooth almost everywhere, with localized singularities. Low-rank decompositions, which are effective for globally smooth functions, become ineffective in this case. 
The proposed structure automatically adapts to the matrix, and requires no prior information
on the location of the singular region. The storage and complexity interpolates between
dense and low-rank matrices, based on the structure, 
with these two cases as extrema. 

We demonstrated techniques for the efficient adaptation of the structure in case of moving
singularities, with the aim of tracking time-evolution of 2D PDEs; the examples show that the
proposed techniques can effectively detect changes in the structure, and ensure the desired
level of accuracy. We developed efficient Lyapunov and 
Sylvester solvers for matrix equations with \HALR{} right-hand-side  and HODLR coefficients. This case is of particular interest, as it often arises
in discretized PDEs. 
Several numerical experiments demonstrate both the effectiveness and the flexibility of the
approach. 

The proposed format may be generalized to discretization of 
3D functions by swapping quadtrees with octrees, making 
the necessary adjustments, and choosing a suitable 
low-rank format for the blocks. 
Similar ideas to the ones presented 
in this work may be used to detect the hierarchical structure 
in an adaptive way, and to adjust the structure in time. However, 
devising an efficient Sylvester solver remains challenging. 
Despite the existence of low-rank solvers for linear systems with Kronecker structure
in the Tucker format \cite{kressner2011low} exploiting
the hierarchical structure introduces major difficulties, which 
we plan to investigate in future work. 
\bibliographystyle{plain}
\bibliography{library}

\end{document}